\begin{document}

\newcommand{\se}{\setcounter{equation}{0}}
\def\theequation{\thesection.\arabic{equation}}

\newtheorem{theorem}{Theorem}[section]
\newtheorem{cdf}{Corollary}[section]
\newtheorem{lemma}{Lemma}[section]
\newtheorem{remark}{Remark}[section]

\newcommand{\e}{{\bf e}}
\newcommand{\f}{{\bf f}}
\newcommand{\R}{\mathbb{R}}

\newcommand{\bu}{{\bf u}}
\newcommand{\bv}{{\bf v}}
\newcommand{\bw}{{\bf w}}
\newcommand{\by}{{\bf y}}
\newcommand{\bz}{{\bf z}}
\newcommand{\bH}{{\bf H}}
\newcommand{\bJ}{{\bf J}}
\newcommand{\bL}{{\bf L}}

\newcommand{\bchi}{\mbox{\boldmath $\chi$}}
\newcommand{\bta}{\mbox{\boldmath $\eta$}}
\newcommand{\bphi}{\mbox{\boldmath $\phi$}}
\newcommand{\bth}{\mbox{\boldmath $\theta$}}
\newcommand{\bxi}{\mbox{\boldmath $\xi$}}
\newcommand{\bzt}{\mbox{\boldmath $\zeta$}}

\newcommand{\hf}{\hat{\f}}
\newcommand{\hta}{\hat{\bta}}
\newcommand{\hth}{\hat{\bth}}
\newcommand{\hbu}{\hat{\bu}}
\newcommand{\hxi}{\hat{\bxi}}
\newcommand{\hby}{\hat{\by}}
\newcommand{\hzt}{\hat{\bzt}}

\newcommand{\td}{\tilde{\Delta}}
\newcommand{\bub}{\bu_{\beta}}
\newcommand{\bubt}{\bu_{\beta,t}}

\newcommand{\bbu}{\bar{\bu}}
\newcommand{\bby}{\bar{\by}}
\newcommand{\bbz}{\bar{\bz}}

\title
{ \large\bf A study of Nonlinear Galerkin Finite Element for time-dependent \\
incompressible Navier-Stokes equations}

\author{Deepjyoti Goswami\footnote{Department of Mathematical Sciences, School of Sciences,
Tezpur University, Napaam, Sonitpur, Assam, India-784028. E-mail: deepjyoti@tezu.ernet.in}}

\date{}
\maketitle

\begin{abstract}
In this article, we discuss a couple of nonlinear Galerkin methods (NLGM) in finite element set
up for time dependent incompressible Navier-Sotkes equations. We show the crucial role played by
the non-linear term in determining the rate of convergence of the methods. We have obtained 
improved error estimate in $\bL^2$ norm, which is optimal in nature, for linear
finite element approximation, in view of the error estimate available in literature, in
$\bH^1$ norm.
\end{abstract}

\vspace{0.30cm} 
\noindent
{\bf Key Words}. nonlinear Galerkin method, Navier-Stokes equations, optimal error estimates.

\vspace{.2in}
\section{Introduction}
In the study of dynamical systems, generated by evolution partial differential equations, we look
into long time behavior of the solutions. In certain cases, solutions converge asymptotically to
a compact set called global attractor. Owing to complicated structures of such sets, notion of
inertial manifold (IM) was introduced, which is a smooth finite dimensional manifold, attracting
all the solution trajectories exponentially. Once we can prove the existence of an IM for a
evolution equation, it is far easier to study it than to study the global attractor.
Unfortunately, the existence of an IM for Navier-Stokes equations could not be established,
even for a simpler case like $2D$ spatially periodic flow. Hence came the concept of artificial
inertial manifold (AIM), a sequence of smooth and finite dimensional manifolds of increasing
dimensions and that the global attractor lies in a small neighborhood of each such manifold with
the distance vanishing exponentially as the dimension increases. AIM has been shown to exist for
the Navier-Stokes equations in $2D$. A numerical technique based on AIM was introduced by Marion
and Temam in \cite{MT89} ans was coined non-linear Galerkin method (NLGM). The method involves
splitting the solution of a dissipative system into large and small scales, simplifying the
small scale equation, thereby obtaining small scale in terms of large scale, relatively easily.
In other words, small scales are made slaves to large scales. This technique and its
modifications are studied in depth in early nineties. For details and a history of developments, we refer to \cite{GP08, NR97}.

NLGM was originally developed in the context of spectral Galerkin approximations. The method in
\cite{MT89} was based on the eigenvectors of the underlying linear elliptic operator. Later in
\cite{MT90}, it was expanded to more general bases and more specifically to finite elements. But
for non-spectral Galerkin discretizations, very few results are available. The problem was that
the extension to such cases is not natural, since the splitting of discrete solution space into
large and small scales (or low and high frequency modes) is not obvious any longer. Marion and
Temam \cite{MT90} developed various NLGMs based on finite element and later on, Marion and Xu
\cite{MX95} and, Ammi and Martin \cite{AM94} worked on general semi-linear reaction diffusion
and the Navier-Stokes equations with higher convergence rate in $H^1$-norm for a NLGM type
two-level finite element method. There, the strong orthogonality properties of spectral Galerkin
approximation was substituted by weaker $L^2$-orthogonality of finite element approximation.
In \cite{MX95}, Marion and Xu considered a semi-linear evolution equation and showed that the
$H^1$-error estimate to be of $\mathcal{O}(H^3)$ (here $H$ is a space discretizing parameter).
They expected $L^2$-error estimate to be of $\mathcal{O}(H^4)$ and remarked that {\it this is an
open problem}. We prove this higher order estimate in a couple of cases and believe that it can
not be extended to other possible cases.

The advent of NLGMs created a lot of interest, since it outperformed the standard Galerkin
method, which is restricted to the large scales only. Also, it was thought of as well-suited for
turbulence modeling. But Heywood and Rannacher \cite{HR93} argued that it is not turbulence
modeling that is responsible for the better performance of NLGM over Galerkin method. On the
other hand, it is the ability of NLGM to handle better, the Gibb phenomenon induced by higher
order boundary incompatibilities induced by no-slip boundary condition. They substantiated it by
showing that, in periodic domain, both perform identically. Later, Guermond and Prudhomme, in
\cite{GP08}, revisited NLGM and showed that, in case of arbitrary smoothness, NLGM will always
outperform Galerkin method. And it has nothing to do with turbulence modeling, but due to the
well-known fact of superconvergence of elliptic projection in $H^1$-norm. We have also exploited
this simple fact to improve the existing $L^2$-error estimate.

The incompressible time dependent Navier-Stokes equations are given by
\begin{eqnarray}\label{nse}
\frac {\partial \bu}{\partial t}+\bu\cdot\nabla\bu-\nu\Delta\bu+\nabla p=\f(x,t),~~x\in \Omega,~t>0
\end{eqnarray}
with incompressibility condition
\begin{eqnarray}\label{ic}
 \nabla \cdot \bu=0,~~~x\in \Omega,~t>0,
\end{eqnarray}
and initial and boundary conditions
\begin{eqnarray}\label{ibc}
 \bu(x,0)=\bu_0~~\mbox {in}~\Omega,~~~\bu=0,~~\mbox {on}~\partial\Omega,~t\ge 0.
\end{eqnarray}
Here, $\Omega$ is a bounded domain in $\mathbb{R}^2$ with boundary $\partial \Omega$ and $\nu$ 
is the inverse of the Reynolds number. $\bu$ and $p$ stand for the velocity and the pressure
of a fluid occupying $\Omega$, respectively. Initial velocity $\bu_0$ is a solenoidal vector
field and $\f$ is the forcing term.

Error estimations of NLGM in mixed finite element set up for Navier-Stokes' equations are
carried out in \cite{AM94}. The small scales equations carry both nonlinearity and time dependence and the following estimates were established.
\begin{align*}
\|(\bu^h-\bu_h)(t)\|_{\bH^1(\Omega)} \le c(t)H^2,~~~
\|(p^h-p_h)(t)\|_{\bL^2(\Omega)} \le c(t)H^2,
\end{align*}
where $(\bu_h,p_h)$ is the Galerkin approximation and $(\bu^h,p^h)$ is the nonlinear Galerkin
approximation. These results were improved to $\mathcal{O}(H^3)$ and similar result was obtained
in $\bL^2$-norm for velocity approximation, but for semilinear parabolic problems, by Marion and
Xu, in \cite{MX95}. We feel that it is not straight forward to carry forward these results to
Navier-Stokes equations. The proof of \cite{MX95} depends on one important estimate involving
the nonlinear term $f(\bu)$, namely
$$|(f(\by_h+\bz_h)-f(\by_h),\bchi)| \le \|\bz_h\|_{\bL^2(\Omega)}\|\bchi\|_{\bL^2(\Omega)}, $$
where $\by_h$ and $\bz_h$ are the large and small scales of $\bu^h$, and $\bchi$ is any element
from the appropriate space containing small scales, see \cite[pp. 1176]{MX95}.
But it may not be possible to establish similar estimate for the nonlinear term $(\bu\cdot\nabla)\bu$. On the other hand, it is observed that the improvement in the order of convergence is due to the fact that the splitting in space is done on the basis of $\bL^2$
projections, unlike previous approaches, where the splitting is based on hierarchical basis.
Similar approach is adopted in this article.

No further improvement is observed for (piecewise) linear finite element discretization and with
forcing term $\f\in\bL^2(\Omega)$. Results, similar to the ones mentioned above, are observed in
\cite{HL96,HL98}, but for $\f\in\bH^1(\Omega)$ and for Navier-Stokes equations. In \cite{NR97},
various nonlinear Galerkin finite elements are studied in depth for one-dimensional problems and
similar results are obtained, although termed as optimal in nature. For example, the method is
studied for piecewise polynomials of degree $2n-1$. And the error estimates obtained in energy-
norm and $L^2$-norm are as follows
\begin{align*}
\|(\bu_h-\bu^h)(t)\|_{\bH^n(\Omega)} &\le c(t)H^{3n-1}, \\
\|(\bu_h-\bu^h)(t)\|_{\bL^2(\Omega)} &\le c(t)\min\{t^{-1/2}H^{4n-1},H^{7n/2-1}\},
\end{align*}
respectively. For $n=1$, we have the piecewise linear finite element approximation and
error estimates are of $O(H^2)$ and $O(H^{5/2})$, in energy-norm and $L^2$-norm,
respectively. Later on He {\it et. al} have studied NLGM and modified NLGM in both spectral and
finite element set ups, \cite{HL99,HLL99,HWCL03,HMMC04} to name a few. In these articles, fully
discrete NLGMs were considered and were shown to exhibit better convergence rate than Galerkin
method. But none were optimal.

Recently in \cite{LH10}, a new projection is employed for a two-level finite element for
Navier-Stokes equations. Error estimates of $O((\log h)^{1/2}H^3)$ and $O((\log h)^{1/2} H^4)$,
in energy-norm and $L^2$-norm, respectively, are obtained. Logarithmic term appears due to the
use of the finite dimensional case of the Brezis-Gallouet inequality. Note that here the forcing
term is taken in $L^{\infty}(0,T;\bL^2(\Omega))\cap L^2(0,T;\bH_0^1(\Omega))$.

In this article, we have established optimal $\bL^2$-error estimate for a couple of NLGMs, while the forcing term is in $\bL^2$. The
approach here is an heuristic one and we basically highlight the importance of nonlinearity of
the small scales equations as the key to the higher or lower order error estimate. Since these
methods can be considered as two-level methods, this suggests ways of constructions or
limitations of two or multi-level methods for lower-order approximations.

The article is organized as follows. In section $2$, we briefly recall the notion of a suitable
weak solution of the Navier-Stokes equations. Section $3$ deals with the Galerkin approximation
and state the error estimates. In Section $4$, we introduce the NLGMs. Error estimates for the
methods are discussed in Section $5$. Finally, in Chapter $6$, we summarize the results obtained
in this article. 

\section{Preliminaries}
\se

For our subsequent use, we denote by bold face letters the $\mathbb{R}^2$-valued
function space such as
\begin{align*}
 \bH_0^1 = [H_0^1(\Omega)]^2, ~~~ \bL^2 = [L^2(\Omega)]^2.
\end{align*}
Note that $\bH^1_0$ is equipped with a norm
$$ \|\nabla\bv\|= \big(\sum_{i,j=1}^{2}(\partial_j v_i, \partial_j
 v_i)\big)^{1/2}=\big(\sum_{i=1}^{2}(\nabla v_i, \nabla v_i)\big)^{1/2}. $$
Further, we introduce divergence free function spaces:
\begin{align*}
\bJ_1 &= \{\bphi\in\bH_0^1 : \nabla \cdot \bphi = 0\} \\
\bJ= \{\bphi \in\bL^2 :\nabla \cdot \bphi &= 0~~\mbox{in}~~
 \Omega,\bphi\cdot{\bf n}|_{\partial\Omega}=0~~\mbox {holds weakly}\}, 
\end{align*}
where ${\bf n}$ is the outward normal to the boundary $\partial \Omega$ and $\bphi
\cdot {\bf n} |_{\partial\Omega} = 0$ should be understood in the sense of trace
in $\bH^{-1/2}(\partial\Omega)$, see \cite{temam}.
For any Banach space $X$, let $L^p(0, T; X)$ denote the space of measurable $X$
-valued functions $\bphi$ on  $ (0,T) $ such that
$$ \int_0^T \|\bphi (t)\|^p_X~dt <\infty~~~\mbox {if}~~1 \le p < \infty, $$
and for $p=\infty$
$$ {\displaystyle{ess \sup_{0<t<T}}} \|\bphi (t)\|_X <\infty~~~\mbox {if}~~p=\infty. $$
Through out this paper, we make the following assumptions:\\
(${\bf A1}$). For ${\bf g} \in \bL^2$, let the unique pair of solutions $\{\bv
\in\bJ_1, q \in L^2 /R\} $ for the steady state Stokes problem
\begin{align*}
 -\Delta\bv + \nabla q = {\bf g}, ~~
 \nabla \cdot\bv = 0~~\mbox {in}~\Omega,~~~\bv|_{\partial\Omega}=0,
\end{align*}
satisfy the following regularity result
$$  \| \bv \|_2 + \|q\|_{H^1/R} \le C\|{\bf g}\|. $$
Note that for a $C^2$-domain or for a two-dimensional convex polygon, assumption (${\bf A1}$)
holds.
\noindent
(${\bf A2}$). The initial velocity $\bu_0$ and the external force $\f$ satisfy for
positive constant $M_0,$ and for $T$ with $0<T \leq \infty$
$$ \bu_0\in\bJ_1,~\f,\f_t \in L^{\infty} (0, T ;\bL^2)~~~\mbox{with} ~~~
\|\bu_0\|_1 \le M_0,~~{\displaystyle{\sup_{0<t<T} }}\big\{\|\f\|,
 \|\f_t\|\big\} \le M_0. $$

\noindent We now present a weak solution of (\ref{nse})-(\ref{ibc}). Find a pair of functions $\{\bu(t), p(t)\},~t>0,$ such that
\begin{equation}\label{wfh}
\left\{\begin{array}{rcl}
(\bu_t, \bphi) +\nu (\nabla \bu, \nabla \bphi) +(\bu\cdot\nabla \bu, \bphi)
&=& (p, \nabla \cdot \bphi) + (\f,\bphi)~~~\forall \bphi \in {\bf H}_0^1, \\
(\nabla \cdot \bu, \chi) &=& 0~~~\forall \chi \in L^2.
\end{array}\right.
\end{equation}
Equivalently, find  $\bu(t) \in {\bf J}_1,~t>0 $ such that
\begin{equation}\label{wfj}
 (\bu_t, \bphi) +\nu (\nabla \bu, \nabla \bphi )+( \bu \cdot \nabla \bu, \bphi)
 =(\f,\bphi),~\forall\bphi \in {\bf J}_1.
\end{equation}
The existence, uniqueness and regularity of the solution of the 2D time dependent Navier-Stokes
equations can be found in \cite{temam}.

\section{Galerkin Method}
\se

From now on, we denote $h$ with $0<h<1$ to be a real positive discretization
parameter tending to zero. Let  $\bH_h$ and $L_h$, $0<h<1$ be two family of
finite dimensional subspaces of $\bH_0^1 $ and $L^2/\R$, respectively,
approximating velocity vector and the pressure. Assume that the following
approximation properties are satisfied for the spaces $\bH_h$ and $L_h$: \\
${\bf (B1)}$ For each $\bw \in\bH_0^1 \cap \bH^2 $ and $ q \in
H^1/\R$ there exist approximations $i_h w \in \bH_h $ and $ j_h q \in
L_h $ such that
$$ \|\bw-i_h\bw\|+ h \| \nabla (\bw-i_h \bw)\| \le K_0 h^2 \| \bw\|_2,
 ~~~~\| q - j_h q \| \le K_0 h \| q\|_1. $$
Further, suppose that the following inverse hypothesis holds for $\bw_h\in\bH_h$:
\begin{align}\label{inv.hypo}
 \|\nabla \bw_h\| \leq  K_0 h^{-1} \|\bw_h\|.
\end{align}
For defining the Galerkin approximations, set for $\bv, \bw, \bphi \in \bH_0^1$,
$$ a(\bv, \bphi) = (\nabla \bv, \nabla \bphi),~~~b(\bv, \bw,\bphi)= \frac{1}{2} (\bv \cdot \nabla \bw , \bphi)- \frac{1}{2} (\bv \cdot \nabla \bphi, \bw). $$
Note that the operator $b(\cdot, \cdot, \cdot)$ preserves the antisymmetric property of
the original nonlinear term, that is,
$$ b(\bv_h, \bw_h, \bw_h) = 0 \;\;\; \forall \bv_h, \bw_h \in {\bH}_h. $$
The discrete analogue of the weak formulation (\ref{wfh}) now reads as: Find $\bu_h(t)
\in {\bf H}_h$ and $p_h(t) \in L_h$ such that $ \bu_h(0)= \bu_{0h} $ and for $t>0$
\begin{equation}\label{dwfh}
\left\{\begin{array}{rcl}
(\bu_{ht}, \bphi_h) +\nu a (\bu_h,\bphi_h)+b(\bu_h,\bu_h,\bphi_h)-(p_h, \nabla \cdot \bphi_h) &=& (\f, \bphi_h),~~\bphi_h\in\bH_h \\
(\nabla \cdot \bu_h, \chi_h) &=& 0,~~\chi_h \in L_h.
\end{array}\right.
\end{equation}
Here $\bu_{0h} \in\bH_h $ is a suitable approximation of $\bu_0\in\bJ_1$. For continuous
dependence of the discrete pressure $p_h(t)\in L_h$ on the discrete velocity $u_h(t)\in
{\bf J}_h$, we assume the following discrete inf-sup (LBB) condition for the finite dimensional spaces $\bH_h$ and $L_h$:\\
\noindent
${\bf (B2')}$  For every $q_h\in L_h$, there exists a non-trivial function $\bphi_h\in
{\bf H}_h$ and a positive constant $K_0,$ independent of $h$, such that
$$ |(q_h, \nabla\cdot \bphi_h)| \ge K_0 \|\nabla \bphi_h \|\|q_h\|. $$

\noindent In order to consider a discrete space, analogous to $\bJ_1$, we
impose the discrete incompressibility condition on $\bH_h$ and call it as
$\bJ_h$. Thus, we define $\bJ_h,$ as
$$ {\bf J}_h = \{ v_h \in {\bf H}_h : (\chi_h,\nabla\cdot v_h)=0
 ~~~\forall \chi_h \in L_h \}. $$
Note that $\bJ_h$ is not a subspace of $\bJ_1$. With $\bJ_h$ as above, we now introduce
an equivalent Galerkin formulation as: Find $\bu_h(t)\in {\bf J}_h $ such that $\bu_h(0) =
\bu_{0h} $ and for $\bphi_h \in \bJ_h,t>0$
\begin{equation}\label{dwfj}
~~~~ (\bu_{ht},\bphi_h) +\nu a (\bu_h,\bphi_h)= -b( \bu_h, \bu_h, \bphi_h)+(\f,\bphi_h).
\end{equation}
Since $\bJ_h$ is finite dimensional, the problem (\ref{dwfj}) leads to a system of
nonlinear differential equations. For global existence of a unique solution
of (\ref{dwfj}) (or unique solution pair of (\ref{dwfh})), we refer to \cite{HR82}.

\noindent We further assume that the following approximation property holds
true for ${\bf J}_h $. \\
\noindent
${\bf (B2)}$ For every $\bw \in {\bf J}_1 \cap {\bf H}^2, $ there exists an
approximation $r_h \bw \in {\bf J_h}$ such that
$$ \|\bw-r_h\bw\|+h \| \nabla (\bw - r_h \bw) \| \le K_5 h^2 \|\bw\|_2 . $$
The $L^2$ projection $P_h:\bL^2\mapsto \bJ_h$ satisfies the following properties
: for $\bphi\in \bJ_h$,
\begin{equation}\label{ph1}
 \|\bphi- P_h \bphi\|+ h \|\nabla P_h \bphi\| \leq C h\|\nabla \bphi\|,
\end{equation}
and for $\bphi \in \bJ_1 \cap \bH^2,$
\begin{equation}\label{ph2}
 \|\bphi-P_h\bphi\|+h\|\nabla(\bphi-P_h \bphi)\|\le C h^2\|\td\bphi\|.
\end{equation}
With the definition of the discrete  operator $\Delta_h: \bH_h \mapsto \bH_h$ through the
bilinear form $a (\cdot, \cdot)$,
\begin{eqnarray}\label{do}
 a(\bv_h, \bphi_h) = (-\Delta_h\bv_h, \bphi)~~~~\forall \bv_h, \bphi_h\in\bH_h,
\end{eqnarray}
we set the discrete analogue of the Stokes operator $\td =P(-\Delta) $ as
$\td_h = P_h(-\Delta_h) $. Using Sobolev imbedding and Sobolev inequality, it is
easy to prove the following Lemma
\begin{lemma}\label{nonlin}
Suppose conditions (${\bf A1}$), (${\bf B1}$) and (${\bf  B2}$) are satisfied. Then there 
exists a positive constant $K$ such that for $\bv,\bw,\bphi\in\bH_h$, the following holds:
\begin{equation}\label{nonlin1}
 |(\bv\cdot\nabla\bw,\bphi)| \le K \left\{
\begin{array}{l}
 \|\bv\|^{1/2}\|\nabla\bv\|^{1/2}\|\nabla\bw\|^{1/2}\|\Delta_h\bw\|^{1/2}
 \|\bphi\|, \\
 \|\bv\|^{1/2}\|\Delta_h\bv\|^{1/2}\|\nabla\bw\|\|\bphi\|, \\
 \|\bv\|^{1/2}\|\nabla\bv\|^{1/2}\|\nabla\bw\|\|\bphi\|^{1/2}
 \|\nabla\bphi\|^{1/2}, \\
 \|\bv\|\|\nabla\bw\|\|\bphi\|^{1/2}\|\Delta_h\bphi\|^{1/2}, \\
 \|\bv\|\|\nabla\bw\|^{1/2}\|\Delta_h\bw\|^{1/2}\|\bphi\|^{1/2}
 \|\nabla\bphi\|^{1/2}
\end{array}\right.
\end{equation}
\end{lemma}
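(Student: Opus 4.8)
The plan is to prove each of the five bounds by the same two-step recipe: first apply H\"older's inequality to split the trilinear form $(\bv\cdot\nabla\bw,\bphi)$ into a product of three factors lying in conjugate Lebesgue spaces, and then estimate each factor either trivially in $\bL^2$, or by a two-dimensional interpolation inequality in $\bL^4$, or by the Agmon inequality in $\bL^\infty$. Writing the exponents as $1/p+1/q+1/r=1$, the five inequalities correspond to the five admissible splittings of the three slots $(\bv,\nabla\bw,\bphi)$: the first to $\bL^4\times\bL^4\times\bL^2$ (since $\tfrac14+\tfrac14+\tfrac12=1$), the second to $\bL^\infty\times\bL^2\times\bL^2$, the third to $\bL^4\times\bL^2\times\bL^4$, the fourth to $\bL^2\times\bL^2\times\bL^\infty$, and the fifth to $\bL^2\times\bL^4\times\bL^4$. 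In each case the location of the two ``half-power'' factors on the right-hand side dictates which of $\bv$, $\nabla\bw$, $\bphi$ is measured in $\bL^4$ and which, if any, in $\bL^\infty$.

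First I would record the interpolation inequalities in the form in which they are needed. In two dimensions the Ladyzhenskaya (Gagliardo--Nirenberg) inequality gives $\|\bta\|_{\bL^4}\le C\|\bta\|^{1/2}\|\nabla\bta\|^{1/2}$ for $\bta\in\bH_0^1$, which I would apply to $\bv$ and to $\bphi$ to produce the factors $\|\bv\|^{1/2}\|\nabla\bv\|^{1/2}$ and $\|\bphi\|^{1/2}\|\nabla\bphi\|^{1/2}$, while the Agmon inequality $\|\bta\|_{\bL^\infty}\le C\|\bta\|^{1/2}\|\bta\|_2^{1/2}$ furnishes the $\bL^\infty$ factors. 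Matching these against the right-hand sides shows that every occurrence of $\|\nabla\bv\|^{1/2}$ or $\|\nabla\bphi\|^{1/2}$ arises from an $\bL^4$ estimate, while every occurrence of a half power of $\|\Delta_h(\cdot)\|$ arises either from an $\bL^\infty$ estimate (bounds two and four) or from an $\bL^4$ estimate applied to the gradient $\nabla\bw$ (bounds one and five).

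The main obstacle is precisely that $\bv,\bw,\bphi$ belong only to the finite element space $\bH_h$, so they are not in $\bH^2$ and the second-derivative norm $\|\cdot\|_2$ appearing in Agmon's inequality has no literal meaning for them; it must be replaced by the discrete quantity $\|\Delta_h(\cdot)\|$ defined through (\ref{do}). Thus the genuine content of the lemma is to establish the two discrete interpolation inequalities
\begin{align*}
\|\bw\|_{\bL^\infty}\le C\|\bw\|^{1/2}\|\Delta_h\bw\|^{1/2},
\qquad
\|\nabla\bw\|_{\bL^4}\le C\|\nabla\bw\|^{1/2}\|\Delta_h\bw\|^{1/2}.
\end{align*}
I would prove these by comparing $\bw\in\bH_h$ with the solution $\bz$ of the continuous Stokes problem whose data is $-\Delta_h\bw$: the regularity assumption (${\bf A1}$) gives $\|\bz\|_2\le C\|\Delta_h\bw\|$, the approximation properties (${\bf B1}$), (${\bf B2}$) together with the $\bL^2$-projection bounds (\ref{ph1})--(\ref{ph2}) control the difference $\bw-\bz$, and the inverse hypothesis (\ref{inv.hypo}) absorbs the residual discrete terms. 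Applying the continuous Agmon and Ladyzhenskaya inequalities to the smooth function $\bz$ and transferring back to $\bw$ then yields the two displayed discrete inequalities. Once these are available, the five estimates follow immediately by inserting the interpolation bounds into the corresponding H\"older splittings described above.
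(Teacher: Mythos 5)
Your proposal is correct and is essentially the route the paper has in mind: the paper omits the proof entirely, remarking only that the lemma follows ``using Sobolev imbedding and Sobolev inequality,'' and your H\"older splittings (correctly matched to the five right-hand sides) together with the discrete Agmon and Ladyzhenskaya-type inequalities proved by comparison with a continuous elliptic problem are the standard way to supply the missing details (cf.\ Heywood--Rannacher \cite{HR82}). One small correction: since $\Delta_h$ in (\ref{do}) is the componentwise discrete Laplacian on $\bH_h$ with no divergence constraint, the comparison function $\bz$ should solve the vector Dirichlet--Poisson problem with data $-\Delta_h\bw$ (so that $\bw$ is precisely its Ritz projection and the projection error estimates apply), not the Stokes problem; the required $\|\bz\|_2 \le C\|\Delta_h\bw\|$ regularity then holds under the same domain hypotheses as (${\bf A1}$).
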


The following lemma and theorem present {\it a priori} estimates of the semi-discrete solution
and optimal error estimate, respectively. 
\begin{lemma}\label{est.uh}
Let $0<\alpha <\nu\lambda_1$, where $\lambda_1>0$ is the smallest eigenvalue
of the Stokes' operator. Let the assumptions (${\bf A1}$),(${\bf A2}$),(${\bf B1}$) and (${\bf  B2}$) hold. Then the semi-discrete Galerkin approximation $\bu_h$ of the velocity $\bu$ satisfies, for $t>0,$
\begin{eqnarray}
 \|\bu_h(t)\|+e^{-2\alpha t}\int_0^t e^{2\alpha s}\|\nabla\bu_h(t)\|^2~ds \le K,
 \label{est.uh1} \\
 \|\nabla\bu_h(t)\|+e^{-2\alpha t}\int_0^t e^{2\alpha s}\|\td_h\bu_h(s)\|^2~ds \le K,
 \label{est.uh2} \\
 (\tau^*(t))^{1/2}\|\td_h\bu_h(t)\| \le K, \label{est.uh3}
\end{eqnarray}
where $\tau^*(t)=\min\{t,1\}$. The positive constant $K$ depends only on the given data.
In particular, $K$ is independent of $h$ and $t$.
\end{lemma}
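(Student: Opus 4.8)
The plan is to derive the three a priori estimates by the standard energy method for Navier-Stokes Galerkin approximations, testing the discrete equation \eqref{dwfj} with successively stronger test functions and exploiting the antisymmetry $b(\bv_h,\bw_h,\bw_h)=0$ together with the nonlinear estimates of Lemma~\ref{nonlin}.

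Let me think carefully about each estimate.

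**Estimate (3.9) — the $L^2$ bound:** Test with $\bphi_h = \bu_h$. Then $b(\bu_h,\bu_h,\bu_h)=0$ kills the nonlinearity entirely, leaving $\frac{1}{2}\frac{d}{dt}\|\bu_h\|^2 + \nu\|\nabla\bu_h\|^2 = (\f,\bu_h)$. Use Poincaré ($\lambda_1\|\bu_h\|^2 \le \|\nabla\bu_h\|^2$) and Young's inequality on the right side to absorb. The factor $e^{2\alpha s}$ suggests a weighted Gronwall: multiply through by $e^{2\alpha t}$, using $0<\alpha<\nu\lambda_1$ so that $2\alpha\|\bu_h\|^2 - 2\nu\|\nabla\bu_h\|^2 \le (2\alpha - 2\nu\lambda_1)\|\bu_h\|^2 \le 0$, and integrate. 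This gives both the pointwise bound on $\|\bu_h(t)\|$ and the weighted integral of $\|\nabla\bu_h\|^2$, using $\|\f\|\le M_0$ from (A2).

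**Estimate (3.10) — the $H^1$ bound:** Test with $\bphi_h = \td_h\bu_h$ (or equivalently $-\Delta_h\bu_h$), so that $a(\bu_h,\td_h\bu_h)=\|\td_h\bu_h\|^2$ and $(\bu_{ht},\td_h\bu_h)=\frac{1}{2}\frac{d}{dt}\|\nabla\bu_h\|^2$. This produces $\frac{1}{2}\frac{d}{dt}\|\nabla\bu_h\|^2 + \nu\|\td_h\bu_h\|^2 = -b(\bu_h,\bu_h,\td_h\bu_h) + (\f,\td_h\bu_h)$. Now the nonlinearity does not vanish; I would bound $|b(\bu_h,\bu_h,\td_h\bu_h)|$ using the second branch of \eqref{nonlin1}, roughly $\|\bu_h\|^{1/2}\|\td_h\bu_h\|^{1/2}\|\nabla\bu_h\|\,\|\td_h\bu_h\|$, then apply Young's inequality to absorb a small multiple of $\|\td_h\bu_h\|^2$ into the left, leaving a term like $\|\nabla\bu_h\|^4$ controlled by the already-established bound from (3.9). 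Again run the weighted Gronwall argument with the $e^{2\alpha s}$ factor.

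**Estimate (3.11) — the smoothing-type bound:** This is where the main obstacle lies. The factor $(\tau^*(t))^{1/2}$ with $\tau^*=\min\{t,1\}$ encodes parabolic smoothing near $t=0$, where $\td_h\bu_h$ may blow up like $t^{-1/2}$ because $\bu_0$ sits only in $\bJ_1$, not $\bH^2$. The standard device is to differentiate \eqref{dwfj} in time, test the result with $\tau^*\bu_{ht}$, and track the weight $\tau^*$ (noting $\frac{d}{dt}\tau^* = 1$ for $t<1$) so that the boundary behavior is absorbed; this controls $\tau^*\|\bu_{ht}\|^2$ and its integral, and then one recovers $\tau^*\|\td_h\bu_h\|^2$ from the equation by writing $\td_h\bu_h$ in terms of $\bu_{ht}$, $\f$, and the nonlinear term and bounding each piece with the estimates from (3.9)–(3.10). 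The delicate point is handling the time-differentiated nonlinear term $b(\bu_{ht},\bu_h,\cdot)+b(\bu_h,\bu_{ht},\cdot)$ using the sharper branches of Lemma~\ref{nonlin} so that everything absorbs and no uncontrolled power of $\|\td_h\bu_h\|$ survives; this requires $\f_t\in L^\infty(0,T;\bL^2)$, which is exactly why (A2) assumes it. Throughout, care must be taken that all constants $K$ depend only on $M_0$, $\nu$, $\lambda_1$ and the approximation constants, and not on $h$ or $t$ — the $h$-independence coming from the fact that $\td_h$ and the inverse hypothesis \eqref{inv.hypo} are used only through the $h$-uniform estimates of Lemma~\ref{nonlin}.
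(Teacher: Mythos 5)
You should first know that the paper does not actually prove Lemma~\ref{est.uh}: it states the estimates and defers to \cite{HR82} (``A detail account \ldots can be found in \cite{HR82}''), so the comparison here is against the standard energy argument of Heywood--Rannacher. Your plan is exactly that argument --- test \eqref{dwfj} with $\bu_h$ for \eqref{est.uh1}, with $\td_h\bu_h$ for \eqref{est.uh2}, then differentiate in time and use the weight $\tau^*$ for \eqref{est.uh3} --- and in that sense the strategy is the right one.

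There is, however, a genuine gap precisely at the point the lemma emphasizes: that $K$ is independent of $t$. For \eqref{est.uh2}, after Young's inequality the nonlinear term leaves
\begin{equation*}
C\|\bu_h\|^2\|\nabla\bu_h\|^4=\bigl[C\|\bu_h\|^2\|\nabla\bu_h\|^2\bigr]\,\|\nabla\bu_h\|^2,
\end{equation*}
and ``running the weighted Gronwall argument'' on this produces a factor $\exp\bigl(C\int_0^t\|\bu_h\|^2\|\nabla\bu_h\|^2\,ds\bigr)$. Estimate \eqref{est.uh1} does \emph{not} bound $\int_0^t\|\nabla\bu_h\|^2\,ds$ uniformly in $t$; it only gives unit-interval bounds $\int_t^{t+1}\|\nabla\bu_h\|^2\,ds\le K$, so the exponent grows linearly in $t$ and plain (even exponentially weighted) Gronwall yields $Ke^{Ct}$, not a $t$-independent constant. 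The missing ingredient is the uniform Gronwall lemma applied on unit time intervals, which is exactly what the unit-interval consequence of \eqref{est.uh1} is set up to feed; the same device is needed again in your Gronwall step for \eqref{est.uh3}. A second, smaller omission concerns \eqref{est.uh3}: after differentiating \eqref{dwfj} in time and testing with $\tau^*\bu_{ht}$, the weight produces the term $(\tau^*)'\|\bu_{ht}\|^2=\|\bu_{ht}\|^2$ (for $t<1$), which can only be integrated if one has already established $\int_0^t\|\bu_{ht}\|^2\,ds\le K$. That intermediate estimate --- obtained by testing the \emph{undifferentiated} equation with $\bu_{ht}$ and bounding $b(\bu_h,\bu_h,\bu_{ht})$ via Lemma~\ref{nonlin} together with the bound on $\int e^{2\alpha s}\|\td_h\bu_h\|^2\,ds$ from \eqref{est.uh2} --- is absent from your sketch, and without it the weighted, time-differentiated estimate does not close.
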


\begin{theorem}\label{errest}
Let $\Omega$ be a convex polygon and let the conditions (${\bf A1}$)-(${\bf A2}$)
and (${\bf B1}$)-(${\bf B2}$) be satisfied. Further, let the discrete initial velocity $\bu_{0h}\in \bJ_h$ with $\bu_{0h}=P_h\bu_0,$ where $\bu_0\in \bJ_1.$ Then,
there exists a positive constant $C$, that depends only on the given data and the
domain $\Omega$, such that for $0<T<\infty $ with $t\in (0,T]$
$$ \|(\bu-\bu_h)(t)\|+h\|\nabla(\bu-\bu_h)(t)\|\le Ce^{Ct}h^2t^{-1/2}. $$
\end{theorem}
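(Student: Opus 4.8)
The plan is to use the Stokes-projection splitting of the error, estimating the projection part by approximation theory and the finite-element part, which lies in $\bJ_h$, by a weighted energy argument that exploits parabolic smoothing. First I would introduce, for each $t$, the Stokes projection $(\bw_h,\tilde{p}_h)\in\bH_h\times L_h$ of the exact pair $(\bu,p)$, defined by
\[
\nu a(\bu-\bw_h,\bphi_h)-(p-\tilde{p}_h,\nabla\cdot\bphi_h)=0 \quad\forall\,\bphi_h\in\bH_h, \qquad (\nabla\cdot(\bu-\bw_h),\chi_h)=0 \quad\forall\,\chi_h\in L_h,
\]
so that $\bw_h\in\bJ_h$. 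Using the inf-sup condition $(\mathbf{B2}')$, the regularity $(\mathbf{A1})$ and the approximation property $(\mathbf{B1})$, standard mixed finite element theory gives the optimal bound $\|\bu-\bw_h\|+h\|\nabla(\bu-\bw_h)\|\le Ch^2(\|\bu\|_2+\|p\|_1)\le Ch^2\|\td\bu\|$, and, by linearity in time, the same bound for $\bta_t$ with $(\bu,p)$ replaced by $(\bu_t,p_t)$. Writing $\bta=\bu-\bw_h$ and $\bth=\bw_h-\bu_h\in\bJ_h$, the essential gain of this projection is that, since $\tilde{p}_h\in L_h$ annihilates the discretely divergence-free test functions, one has $\nu a(\bta,\bphi_h)=(p,\nabla\cdot\bphi_h)$ for all $\bphi_h\in\bJ_h$; this exactly cancels the pressure-consistency term that would otherwise appear because $\bJ_h\not\subseteq\bJ_1$.

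Next I would derive the error equation. Subtracting (\ref{dwfj}) from the continuous weak form tested against $\bphi_h\in\bJ_h$ and using the cancellation above gives
\[
(\bth_t,\bphi_h)+\nu a(\bth,\bphi_h)=-(\bta_t,\bphi_h)-\big[b(\bu,\bu,\bphi_h)-b(\bu_h,\bu_h,\bphi_h)\big].
\]
I would rewrite the nonlinear difference as $b(\bu,\bta+\bth,\bphi_h)+b(\bta+\bth,\bu_h,\bphi_h)$ and then set $\bphi_h=\bth$. By the antisymmetry of $b$ (so that $b(\bu,\bth,\bth)=0$), the surviving nonlinear contributions are $b(\bu,\bta,\bth)$, $b(\bta,\bu_h,\bth)$ and $b(\bth,\bu_h,\bth)$, leading to
\[
\tfrac12\frac{d}{dt}\|\bth\|^2+\nu\|\nabla\bth\|^2 \le |(\bta_t,\bth)|+|b(\bu,\bta,\bth)|+|b(\bta,\bu_h,\bth)|+|b(\bth,\bu_h,\bth)| .
\]

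Each trilinear term I would bound by the appropriate inequality of Lemma \ref{nonlin} (or its continuous counterpart), using the a priori bounds $\|\nabla\bu_h\|\le K$ and $(\tau^*)^{1/2}\|\td_h\bu_h\|\le K$ of Lemma \ref{est.uh} and the regularity of $\bu$, so that Young's inequality absorbs a fraction of $\nu\|\nabla\bth\|^2$ on the left and leaves terms of the form $C(1+\|\nabla\bu_h\|^2)\|\bth\|^2+C\|\nabla\bta\|^2$; the term $(\bta_t,\bth)$ is bounded by $\|\bta_t\|\,\|\bth\|$. With $\bu_{0h}=P_h\bu_0$ and $\bu_0\in\bJ_1$, the initial finite-element error is only first order, $\|\bth(0)\|\le Ch\|\nabla\bu_0\|$, so a straightforward (unweighted) application of Gronwall's lemma yields only the first-order bound together with the growth factor $e^{Ct}$. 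To recover the optimal $\bL^2$ order I would then run a second, weighted estimate: multiplying the differential inequality by $\tau^*(t)$ and integrating, the weight annihilates the singular initial contribution, while the parabolic smoothing estimates for the exact solution (of the type $\tau^*\|\td\bu\|^2\le C$, $\tau^*\|\bu_t\|^2\le C$ and their time-integrated, weighted analogues) keep $\int_0^t\tau^*\|\bta_t\|^2\,ds\le Ch^4$ finite even though the unweighted integral is not; together with the discrete smoothing bound (\ref{est.uh3}) this gives $\tau^*(t)\|\bth(t)\|^2\le Ce^{Ct}h^4$, hence $\|\bth(t)\|\le Ce^{Ct}h^2(\tau^*(t))^{-1/2}$. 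Adding the projection bound for $\bta$ and using $(\tau^*(t))^{-1/2}\le t^{-1/2}$ on $(0,T]$ then yields both the $\bL^2$ and (after a parallel weighted bound for $\|\nabla\bth\|$) the $\bH^1$ parts of the claimed estimate.

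The main obstacle I anticipate is reconciling the nonlinearity with the weak regularity near $t=0$: the term $b(\bth,\bu_h,\bth)$ cannot be closed with a mere $\bH^1$ bound on $\bu_h$ and instead forces the use of the discrete smoothing estimate $(\tau^*)^{1/2}\|\td_h\bu_h\|\le K$, and the weights needed there, the weight taming $(\bta_t,\bth)$, and the Gronwall factor must be balanced so that exactly one half-power of $t$ survives and no spurious power of $t^{-1}$ remains. A secondary but structurally important point, already flagged above, is that $\bJ_h\not\subseteq\bJ_1$, which is precisely why the pressure must be carried inside the Stokes projection; once that cancellation is in hand, the remainder is a careful weighted Gronwall argument.
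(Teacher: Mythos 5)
Your skeleton --- the Stokes projection splitting $\bu-\bu_h=\bta+\bth$ with $\bth\in\bJ_h$, the antisymmetrized nonlinearity, the a priori bounds of Lemma \ref{est.uh}, and $\tau^*$-weights --- is the right starting point, and it is in fact the starting point of the proof the paper relies on (the paper proves nothing here; it quotes Theorem \ref{errest} from Heywood--Rannacher \cite{HR82}). But there is a genuine gap at the decisive step, the optimal $\bL^2$ bound, and it cannot be repaired inside a pure weighted-energy/Gronwall framework. First, your key claim $\int_0^t\tau^*\|\bta_t\|^2\,ds\le Ch^4$ is false under ({\bf A2}). Since only $\bu_0\in\bJ_1$ is assumed (and Remark 4.2 of the paper explains why one must not assume more, because of nonlocal compatibility conditions), one has $\|\bta_t\|\le Ch^2(\|\bu_t\|_2+\|p_t\|_1)$ with $\|\bu_t\|_2+\|p_t\|_1$ behaving like $t^{-3/2}$ as $t\to0$; the sharp weighted regularity available from \cite{HR82} carries the weight $(\tau^*)^2$, namely
\begin{equation*}
\int_0^t(\tau^*)^2\big(\|\bu_s\|_2^2+\|p_s\|_1^2\big)\,ds\le C,
\end{equation*}
and with the weight $\tau^*$ instead of $(\tau^*)^2$ the integral diverges --- already for the heat semigroup, finiteness of $\int_0^t s\,\|u_s\|_2^2\,ds$ forces $u_0\in H^2$. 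Second, even granting the correct $(\tau^*)^2$-weighted bound, your weighted Gronwall step does not close: differentiating the weight produces $\int_0^t\|\bth\|^2\,ds$ on the right-hand side, and bounding $\int_0^t\tau^*|(\bta_s,\bth)|\,ds$ by Cauchy--Schwarz likewise leaves a factor $\big(\int_0^t\|\bth\|^2\,ds\big)^{1/2}$; both quantities are of exactly the optimal order you are trying to prove, and absorbing them by Gronwall would require the nonintegrable coefficient $(\tau^*)^{-1}$. So the argument is circular precisely where optimality has to be gained.

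The missing idea is a parabolic duality step: one first establishes the optimal $L^2(\bL^2)$ estimate $\int_0^t\|(\bu-\bu_h)(s)\|^2\,ds\le C(t)h^4$ by testing against the solution of a discrete backward (dual) problem, in which time derivatives are paired with the dual solution and absorbed through its regularity and approximation properties, so that $\|\bta_t\|$ never has to be integrated with an unaffordable weight; only then does one run the weighted energy argument, closing the two troublesome terms above with the $L^2(\bL^2)$ bound. This is how \cite{HR82} proves Theorem \ref{errest}, and it is exactly the scheme the present paper reproduces at the discrete level for the nonlinear Galerkin error: the backward problem (\ref{bp1}) and the $L^2(\bL^2)$ estimate of Lemma \ref{l2l2.bxi} feed the $L^\infty(\bL^2)$ estimate of Lemma \ref{l2.bxi}. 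Note also that your argument would be essentially correct for smooth compatible data ($\bu_0\in\bH^2\cap\bJ_1$ with the associated compatibility, where $\|\bta_t\|\le Ch^2$ uniformly), but the factor $t^{-1/2}$ in the theorem is precisely the signature of the nonsmooth-data situation the hypothesis ({\bf A2}) encodes; the $\bH^1$ half of the assertion can indeed be closed by the energy argument you describe, the $\bL^2$ half cannot.
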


A detail account of the finite element spaces, estimates of the semi-discrete solutions and error estimates can be found in \cite{HR82}.

\begin{remark}
From numerical point of view, it is a standard practice to work with mixed formulation
(\ref{dwfh}) rather than (\ref{dwfj}). But following \cite{HR82}, we prefer to base our analysis
on the divergence free formulation (\ref{dwfj}). The same analysis can easily be carried forward
to mixed formulation. It is just a matter of taste.
\end{remark}

\section{Nonlinear Galerkin Method}
\se

In this section, we introduce another space discretizing parameter $H$ such that $0<h<<H$ and
both $h,H$ tend to $0$. And based on that, we split finite element space $\bJ_h$ into two.
\begin{eqnarray}\label{space.split}
\bJ_h=\bJ_H + \bJ_h^H, ~~\mbox{with}~~\bJ_h^H= (I-P_H)\bJ_h
\end{eqnarray}
Note that, by definition, the spaces $\bJ_H$ and $\bJ_h^H$ are orthogonal with respect to the
$\bL^2$-inner product $(\cdot,\cdot)$. In practice, $\bJ_H$ corresponds to a coarse grid and
$\bJ_h^H$ corresponds to a fine grid. In case of mixed method, we only split the velocity space
and pressure space remains the same.

\noindent The following properties are crucial for our error estimates. For a proof,
we refer to \cite{AM94}.
\begin{eqnarray}\label{jhH1}
\|\bchi\| \le cH\|\bchi\|_1,~~\bchi\in\bJ_h^H.
\end{eqnarray}
And there exists $0<\rho<1$ independent of $h$ and $H$ such that
\begin{equation}\label{jhH2}
|a(\bphi,\bchi)| \le (1-\rho)\|\bphi\|_1\|\bchi_h\|_1,~~\bphi\in\bJ_H, \bchi\in\bJ_h^H.
\end{equation}
From (\ref{jhH2}), we can easily deduce that
\begin{equation}\label{jhH3}
\rho\{\|\bphi\|_1^2+\|\bchi\|_1^2\} \le \|\bphi+\bchi_h\|_1^2,~~\bphi\in\bJ_H, \bchi\in\bJ_h^H.
\end{equation}
In nonlinear Galerkin methods, we look for a solution $\bu^h$ ($\in \bJ_h$) in terms of its
components $\by^H$ (coarse grid) and $\bz^h$ (fine grid).
$$ \bu^h=\by^H+\bz^h \in \bJ_H+\bJ_h^H. $$
In the first of the two methods (NLGM I), for $t>t_0>0$, we look for a pair $(\by^H,\bz^h)$ satisfying
\begin{equation}\label{dwfJH}
\left\{\begin{array}{rcl}
(\by_t^H, \bphi) +\nu a (\bu^h,\bphi)+b(\bu^h,\bu^h,\bphi) &=& (\f,\bphi),~~\bphi\in\bJ^H, \\
\nu a (\bu^h,\bchi)+ b(\bu^h,\bu^h,\bchi) &=& (\f,\bchi),~~\bchi\in\bJ_h^H.
\end{array}\right.
\end{equation}
In the second one (NLGM II), we again look for a pair $(\by^H,\bz^h)$ satisfying, for $t>t_0>0$
\begin{equation}\label{dwfJH1}
\left\{\begin{array}{rcl}
(\by_t^H, \bphi) +\nu a (\bu^h,\bphi)+ b(\bu^h,\bu^h,\bphi) &=& (\f,\bphi),~~\bphi\in\bJ^H, \\
\nu a (\bu^h,\bchi) + b(\bu^h,\by^H,\bchi)+b(\by^H,\bz^h,\bchi) &=&(\f,\bchi),~~\bchi\in\bJ_h^H.
\end{array}\right.
\end{equation}
We set $\by^H(t_0)=P_H\bu_h(t_0).$ Note that the two systems differ only by the term $b(\bz^h,
\bz^h, \bchi)$, which has been dropped from the second system, assuming $\bz^h$ is small.
\begin{remark}
Here and henceforth, subscript means the classical Galerkin method and superscript
means the nonlinear Galerkin method.
\end{remark}
\begin{remark}
The nonlinear Galerkin approximations are carried out away from $t=0$. We can employ Galerkin
approximation to obtain $(\bu_h,p_h)$ on the interval $(0,t_0]$. This is done to avoid nonlocal
compatibility conditions. In \cite{HR82}, Heywood and Rannacher showed that to assume higher
regularity for the solution demands some nonlocal compatibility conditions to be satisfied by
initial velocity and initial pressure. These conditions are very difficult to verify and do
not arise in physical context. In order to avoid them, we must admit singularity of velocity
field in a higher order norm at $t=0$, like (1.6) from \cite{HR82}. Since the error analysis of NLGM demands higher regularity of the velocity and this means higher singularity at $t=0$, the idea is to avoid these kinds of singularity by staying away from $t=0$.
\end{remark}
\begin{remark}
Both the NLGs can be heuristically derived from (\ref{dwfj}) as follows.
We split the Galerkin approximation $\bu_h$ with the help of the $\bL^2$ projection $P_H$.
\begin{equation}\label{gss} 
\bu_h=P_H\bu_h+(I-P_H)\bu_h=\by_H+\bz_h, 
\end{equation}
And we project the system (\ref{dwfj}) on $\bJ_H,\bJ_h^H$ to obtain the coupled system:
\begin{equation}\label{dwfjH}
\left\{\begin{array}{rc}
(\by_{ht},\bphi) +& \nu a (\bu_h,\bphi)= -b( \bu_h, \bu_h, \bphi)+(\f,\bphi), \\
(\bz_{ht},\bchi)+& \nu a (\bu_h,\bchi)= -b( \bu_h, \bu_h, \bchi)+(\f,\bchi),
\end{array}\right.
\end{equation}
for $\bphi \in \bJ_H,~\bchi \in \bJ_h^H$.
Assuming the time derivative and higher space derivatives of $\bz_h$ are small, various
(modified) NLG methods are defined. In case, time derivative of $\bz_h$ is retained in the
equation, different time-steps can be employed for the two equations; (much) smaller time-step
for the equation involving $\by_H$, thereby keeping $\bz_h$ remains steady.
\end{remark}

The well-posedness of both the NLGMs (\ref{dwfJH}) and (\ref{dwfJH1}) follows easily from the
works of Marion {\it et. al.} \cite{AM94, MT90, MX95}. For the sake of completeness, we present
below, the {\it a priori} estimates for the approximate solution pair $\{\by^H,\bz^h\}$. And
for the sake of brevity, we only sketch a proof.
\begin{lemma}\label{est1.H}
Under the assumptions of Lemma \ref{est.uh}, the solution pair $(\by^H,\bz^h)$ of
(\ref{dwfjH}) or (\ref{dwfJH1}) satisfy the following estimates, for $t>t_0$
\begin{equation} \label{est1.H1}
\|\by^H\|^2+e^{-2\alpha t}\int_{t_0}^t e^{2\alpha s} \|\nabla(\by^H+\bz^h)\|^2ds \le K.
\end{equation}
And if $H$ is small enough to satisfy
\begin{equation}\label{H.restrict}
\nu-cL_H\|\by^H\|>0,
\end{equation}
where $L_H \sim |log~H|^{1/2}$, then the following estimate holds
\begin{equation}\label{apr2}
\|\bz^h\| \le K.
\end{equation}
The constant $K>0$ depends only on the given data. In particular, $K$ is independent of $h,H$
and $t$.
\end{lemma}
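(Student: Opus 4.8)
The plan is to mimic the standard energy-method derivation of the a~priori bounds for the Galerkin approximation (Lemma~\ref{est.uh}), but now on the \emph{coupled} system obtained by adding the two equations of (\ref{dwfJH}) (resp.\ (\ref{dwfJH1})). First I would test the first equation of the system with $\bphi=\by^H$ and the second with $\bchi=\bz^h$, then add. Because $\by^H\in\bJ_H$ and $\bz^h\in\bJ_h^H$ are $\bL^2$-orthogonal and $\bu^h=\by^H+\bz^h$, the time-derivative terms combine as $(\by_t^H,\by^H)=\tfrac12\frac{d}{dt}\|\by^H\|^2$ (note only $\by^H$ carries a time derivative), while the viscous terms assemble into $\nu\,a(\bu^h,\bu^h)=\nu\|\nabla\bu^h\|^2=\nu\|\nabla(\by^H+\bz^h)\|^2$. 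The trilinear terms vanish by the antisymmetry $b(\bu^h,\bu^h,\bu^h)=0$, which is exactly why the skew-symmetric form $b(\cdot,\cdot,\cdot)$ was introduced; for the modified system (\ref{dwfJH1}) the sum of the nonlinear contributions is $b(\bu^h,\bu^h,\by^H)+b(\bu^h,\by^H,\bz^h)+b(\by^H,\bz^h,\bz^h)$, and here $b(\by^H,\bz^h,\bz^h)=0$ while the remaining two combine to $b(\bu^h,\bu^h,\by^H)+b(\bu^h,\by^H,\bz^h)$, which I would show also collapses using $b(\bu^h,\by^H,\by^H)=0$. The right-hand side $(\f,\by^H+\bz^h)=(\f,\bu^h)$ is controlled by $\|\f\|\,\|\bu^h\|$ and absorbed via Young's inequality and the Poincar\'e inequality $\|\bu^h\|\le\lambda_1^{-1/2}\|\nabla\bu^h\|$. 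This yields a differential inequality of the form $\frac{d}{dt}\|\by^H\|^2+2\nu\|\nabla\bu^h\|^2\le \tfrac{1}{\nu\lambda_1}\|\f\|^2$, and multiplying by the integrating factor $e^{2\alpha s}$ with $0<\alpha<\nu\lambda_1$ and integrating from $t_0$ to $t$ delivers (\ref{est1.H1}), using the initial datum $\by^H(t_0)=P_H\bu_h(t_0)$ together with the bound on $\|\bu_h(t_0)\|$ from Lemma~\ref{est.uh}.

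To obtain (\ref{apr2}) I would test only the fine-scale equation (the second line of (\ref{dwfJH}) or (\ref{dwfJH1})) with $\bchi=\bz^h$. The viscous term gives $\nu\,a(\bu^h,\bz^h)=\nu a(\by^H,\bz^h)+\nu a(\bz^h,\bz^h)$; using the spectral-gap-type estimate (\ref{jhH2}) the cross term $\nu a(\by^H,\bz^h)$ is bounded by $\nu(1-\rho)\|\by^H\|_1\|\bz^h\|_1$, so that after moving it to the left I retain a coercive contribution proportional to $\nu\rho\|\nabla\bz^h\|^2$. The key difficulty is controlling the trilinear terms involving $\bz^h$: I would estimate $|b(\bu^h,\bu^h,\bz^h)|$ (or the corresponding truncated terms for (\ref{dwfJH1})) using the bounds of Lemma~\ref{nonlin}, selecting the estimate that places a half-power of $\|\Delta_h\cdot\|$ or a full $\|\nabla\cdot\|$ factor where the a~priori bounds of (\ref{est1.H1}) make it controllable, and crucially using the small-scale inequality (\ref{jhH1}), $\|\bz^h\|\le cH\|\bz^h\|_1$, to generate the factor $H$ that makes the nonlinear term subordinate to the viscous dissipation. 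This is where the smallness condition (\ref{H.restrict}), $\nu-cL_H\|\by^H\|>0$ with $L_H\sim|\log H|^{1/2}$, enters: the logarithmic factor arises from the two-dimensional Sobolev/Brezis--Gallou\"et-type estimate needed to bound the convection term $b(\by^H,\bz^h,\bz^h)$-style contributions, and the condition guarantees that the coefficient multiplying $\|\nabla\bz^h\|^2$ on the left stays strictly positive after absorbing the nonlinearity.

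The main obstacle, and the step I would spend the most care on, is the nonlinear estimate for (\ref{apr2}): one must choose among the five inequalities of Lemma~\ref{nonlin} so that every factor appearing is already bounded by (\ref{est1.H1}) or by the a~priori bounds of Lemma~\ref{est.uh} for $\bu_h$, while simultaneously extracting enough powers of $H$ (via (\ref{jhH1})) and keeping the coercivity constant positive under (\ref{H.restrict}). For the unmodified system (\ref{dwfJH}) the convective term retains the full $b(\bu^h,\bu^h,\bchi)$ and so the logarithmic factor $L_H$ is genuinely needed; for (\ref{dwfJH1}) the dropped term $b(\bz^h,\bz^h,\bchi)$ simplifies matters but the same strategy applies. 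Once the nonlinearity is absorbed, I arrive at $\|\bz^h\|^2\le K$ (the $\bL^2$ bound following from $\|\bz^h\|\le cH\|\bz^h\|_1$ once $\|\nabla\bz^h\|$ is controlled), completing the proof; the constant $K$ depends only on the data through Lemma~\ref{est.uh} and is therefore independent of $h,H,t$ as claimed.
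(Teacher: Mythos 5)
Your proposal follows essentially the same route as the paper: the first bound by testing with $(\by^H,\bz^h)$, adding, using the cancellation of the nonlinear terms, Poincar\'e's inequality and the integrating factor $e^{2\alpha t}$; the second by testing the fine-scale equation with $\bz^h$ alone, extracting powers of $H$ via (\ref{jhH1}) and the inverse inequality, invoking the discrete Brezis--Gallouet inequality for the logarithmic factor, and using (\ref{H.restrict}) to kick back the resulting term --- which is exactly the paper's argument. The one cosmetic slip is the naming of the troublesome convection term: $b(\by^H,\bz^h,\bz^h)$ vanishes identically by antisymmetry, and the term that genuinely forces the Brezis--Gallouet bound (applied to put $\by^H$ in $\bL^{\infty}$) is $b(\bz^h,\by^H,\bz^h)$; this does not affect the validity of your strategy.
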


\begin{proof}
Choose $\bphi=\by^H,\bchi=\bz^h$ in (\ref{dwfjH}) or (\ref{dwfJH1}) and add the resulting
equations. Note that the nonlinear terms sum up to 0. Multiply by $e^{2\alpha t}$. Use
Poincar\'e inequality and then kickback argument. Finally, integrate from $t_0$ to $t$ and
multiply by $e^{-2\alpha t}$ to complete the first estimate.
For the second estimate, we put $\bchi=\bz^h$ in the second equation of (\ref{dwfjH}) or
(\ref{dwfJH1}).
\begin{align}\label{est1.H04}
\nu\|\nabla\bz^h\|^2 \le \|\f\|\|\bz^h\|+\nu \|\nabla\by^H\|\|\nabla\bz^h\|+|b(\by^H+\bz^h, \by^H, \bz^h)|.
\end{align}
Using (\ref{inv.hypo}) and (\ref{jhH1}), we find
\begin{align*}
b(\by^H,\by^H,\bz^h) & \le 2^{1/2}\|\by^H\|^{1/2}\|\nabla\by^H\|^{3/2}\|\bz^h\|^{1/2}
\|\nabla\bz^h\|^{1/2} \le c\|\by^H\|\|\nabla\by^H\|\|\nabla\bz^h\| \\
b(\bz^h,\by^H,\bz^h) & \le \|\bz^h\|\|\nabla\bz^h\|\|\by^H\|_{\infty}
\le cL_H\|\bz^h\|\|\nabla\bz^h\|\|\nabla\by^H\|
\end{align*}
We have used the finite dimensional case of Brezis-Gallouet inequality (see \cite[(3.12)]{LH10})
$$ \|\bu_h\|_{\infty} \le cL_h\|\nabla\bu_h\|;~~L_h\sim |log~h|^{1/2}. $$
Incorporate these estimates in (\ref{est1.H04}). With the re-use of (\ref{jhH1}), we have
$$ \nu\|\nabla\bz^h\| \le cH\|\f\|+\nu \|\nabla\by^H\|+c\|\by^H\|\|\nabla\by^H\|+cL_H\|\bz^h\| \|\nabla\by^H\|. $$
Again use (\ref{inv.hypo}), we find
\begin{align*}
\nu\|\bz^h\| \le cH\|\nabla\bz^h\| \le cH^2\|\f\|+c\|\by^H\|+c\|\by^H\|^2+cL_H\|\bz^h\|\|\by^H\|
\end{align*}
Apply (\ref{H.restrict}) and (\ref{est1.H1}), we conclude the proof.
\end{proof}
\begin{remark}
Under the smallness assumption on $H$, that is, (\ref{H.restrict}), we could similarly prove
higher order estimates of $\by^H$ and $\bz^h$. For details, we refer to \cite{AM94, MT90, MX95}.
\end{remark}

\section{Error Estimate}
\se

In this section, we work out the error between classical Galerkin approximation and
nonlinear Galerkin approximation of velocity.

\noindent Before actually working out the error estimates, we present below the Lemma
involving the estimate of $\bz_h$. For a proof, we refer to \cite{AM94}.

\begin{lemma}\label{bzh.est}
Under the assumptions Lemma 3.2 and for the solution $\bu_h$ of (\ref{dwfj}), the following
estimates are satisfied for $\bz_h=(I-P_H)\bu_h$ and for $t>t_0$
\begin{equation}\label{bzh.est1}
\left\{\begin{array}{rcl}
\|\bz_h\|+H\|\bz_h\|_1 \le K(t)H^2, \\
\|\bz_{ht}\|+H\|\bz_{ht}\|_1 \le K(t)H^2, \\
\|\bz_{htt}\|+H\|\bz_{htt}\|_1 \le K(t)H^2.
\end{array}\right.
\end{equation}
\end{lemma}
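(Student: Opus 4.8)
The plan is to reduce the entire lemma to a single $O(H)$ gradient bound on the high-frequency part and then invoke \eqref{jhH1}. Writing $\bz_h=(I-P_H)\bu_h\in\bJ_h^H$, the inverse-type inequality \eqref{jhH1} gives $\|\bz_h\|\le cH\|\bz_h\|_1$, and since the $\bH_0^1$-norm $\|\cdot\|_1$ is $\|\nabla\cdot\|$, both halves of the first line, $\|\bz_h\|\le K(t)H^2$ and $H\|\bz_h\|_1\le K(t)H^2$, follow at once from the single estimate $\|\nabla\bz_h\|\le K(t)H$. Thus the whole lemma reduces to proving $O(H)$ gradient bounds for $(I-P_H)$ applied to $\bu_h$, $\bu_{ht}$, and $\bu_{htt}$.

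First I would prove $\|\nabla\bz_h\|\le K(t)H$ by comparing the Galerkin solution $\bu_h$ with the exact solution $\bu$, which is smooth away from $t=0$. Splitting $\bz_h=(I-P_H)(\bu_h-\bu)+(I-P_H)\bu$, the second term is handled by the coarse-scale analogue of \eqref{ph2}: since $\bu(t)\in\bJ_1\cap\bH^2$ for $t>t_0$, one gets $H\|\nabla(I-P_H)\bu\|\le CH^2\|\td\bu\|$, hence $\|\nabla(I-P_H)\bu\|\le K(t)H$. For the first term I would use $\|\nabla(I-P_H)(\bu_h-\bu)\|\le\|\nabla(\bu_h-\bu)\|+\|\nabla P_H(\bu_h-\bu)\|$; Theorem \ref{errest} bounds the first summand by $K(t)h$, while the inverse hypothesis \eqref{inv.hypo} at scale $H$ together with $\bL^2$-stability of $P_H$ gives $\|\nabla P_H(\bu_h-\bu)\|\le K_0H^{-1}\|\bu_h-\bu\|\le K_0H^{-1}K(t)h^2$. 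The crucial arithmetic is $H^{-1}h^2=(h/H)\,h\le h\le H$, valid precisely because $0<h\ll H$; this is exactly what lets the genuinely $O(h)$ finite element error be absorbed into the target $O(H)$ bound. Combining gives $\|\nabla\bz_h\|\le K(t)H$, and the first line of the lemma follows.

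For the time derivatives I would note that $P_H$ is independent of $t$, so $\bz_{ht}=(I-P_H)\bu_{ht}$ and $\bz_{htt}=(I-P_H)\bu_{htt}$, and then repeat the argument verbatim with $\bu_t$ and $\bu_{tt}$ in place of $\bu$. This needs two imported ingredients: the regularity $\bu_t(t),\bu_{tt}(t)\in\bJ_1\cap\bH^2$ with $\|\td\bu_t\|,\|\td\bu_{tt}\|\le K(t)$ for $t>t_0$, and the matching finite element error estimates $\|\bu_{ht}-\bu_t\|\le K(t)h^2$, $\|\nabla(\bu_{ht}-\bu_t)\|\le K(t)h$, and likewise for the second time derivative. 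With these in hand the same split and the same $H^{-1}h^2\le H$ absorption give $\|\nabla\bz_{ht}\|\le K(t)H$ and $\|\nabla\bz_{htt}\|\le K(t)H$, from which \eqref{jhH1} produces the remaining two lines.

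The main obstacle is not the projection algebra, which is routine, but securing the higher regularity of $\bu$ together with the finite element error estimates for the time derivatives $\bu_{ht}$ and $\bu_{htt}$: these are not contained in the a priori Lemma \ref{est.uh} and must be taken from the Heywood--Rannacher theory \cite{HR82}. They hold only away from $t=0$, where the nonlocal compatibility conditions are circumvented at the price of constants that are singular as $t_0\to 0$; this is exactly why the statement is restricted to $t>t_0$ and carries a $t$-dependent constant $K(t)$ rather than a uniform $K$. A secondary point to verify is that the inverse hypothesis \eqref{inv.hypo} and the approximation estimate \eqref{ph2} are available at the coarse scale $H$ for $\bJ_H$, which holds since $\bJ_H$ is the finite element velocity space built on a quasi-uniform grid of size $H$.
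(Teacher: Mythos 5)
Your reduction via \eqref{jhH1} and your handling of the first line are essentially correct: writing $\bz_h=(I-P_H)(\bu_h-\bu)+(I-P_H)\bu$, the coarse-scale analogue of \eqref{ph2}, Theorem \ref{errest}, the $\bL^2$-contractivity of $P_H$ and the inverse inequality on $\bJ_H$ do give $\|\nabla\bz_h\|\le K(t)H$, and the arithmetic $H^{-1}h^2\le h\le H$ is the right observation. Note also that the paper itself contains no proof of this lemma; it defers entirely to \cite{AM94}, so there is nothing internal to compare against beyond the a priori machinery of Lemma \ref{est.uh}.

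The genuine gap is in the second and, above all, the third line, where you claim the argument "repeats verbatim". It repeats only if you have (i) pointwise-in-time regularity $\|\bu_t(t)\|_2,\|\bu_{tt}(t)\|_2\le K(t)$ for $t>t_0$, and (ii) Galerkin error estimates $\|(\bu-\bu_h)_t\|+h\|\nabla(\bu-\bu_h)_t\|\le K(t)h^2$ and the same for $(\bu-\bu_h)_{tt}$. Item (ii) at the level of the second time derivative is not contained in \cite{HR82}, and establishing it is a piece of parabolic error analysis comparable in size to the lemma itself, so the hardest two-thirds of the statement is being outsourced to a citation that does not deliver it. Item (i) is worse than a citation issue: under the paper's assumption (${\bf A2}$), which controls only $\f$ and $\f_t$, a pointwise bound on $\|\bu_{tt}(t)\|_2$ (indeed even on $\|\bu_{tt}(t)\|$) is not available, since Stokes regularity applied to the time-differentiated momentum equation brings in $\f_{tt}$ and $\bu_{ttt}$; this hypothesis deficiency is inherited from \cite{AM94} by the lemma as stated, but your route leans on it explicitly. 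A route better matched to the paper's toolkit avoids (i)--(ii) altogether: prove, by the standard two-grid duality argument against a \emph{stationary} Stokes solution, the discrete coarse-grid property $\|(I-P_H)\bv_h\|+H\|\nabla(I-P_H)\bv_h\|\le CH^2\|\td_h\bv_h\|$ for $\bv_h\in\bJ_h$, and apply it with $\bv_h=\bu_h,\bu_{ht},\bu_{htt}$; the first case is then finished by \eqref{est.uh3}, and the other two reduce to discrete a priori bounds on $\|\td_h\bu_{ht}\|$ and $\|\td_h\bu_{htt}\|$ obtained by energy arguments on the time-differentiated equation \eqref{dwfj}, with no parabolic error estimates for time derivatives needed anywhere (though the $\bz_{htt}$ line still requires control of $\f_{tt}$, which you should flag as an additional hypothesis).
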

\noindent
In order to separate the effect of the nonlinearity in the error, we introduce
$$ \bbu (\in\bJ_h)=P_H\bbu+(I-P_H)\bbu =\bby+\bbz\in \bJ_H+\bJ_h^H $$
satisfying the following linearized system ($t>t_0$)
\begin{align}\label{dwfjHl}
(\bby_t,\bphi) +& \nu a(\bbu,\bphi)+ \int_{t_0}^t \beta(t-s) a(\bbu(s), \bphi)~ds
= -b( \bu_h, \bu_h, \bphi)+(\f,\bphi)~~ \bphi \in \bJ_H \nonumber \\
& \nu a (\bbu,\bchi)+ \int_{t_0}^t \beta(t-s) a(\bbu(s), \bchi)~ds
= -b( \bu_h, \bu_h, \bchi)+(\f,\bchi)~~ \bchi \in \bJ_h^H,
\end{align}
and $\bby(t_0)=\by_H(t_0).$ Being linear it is easy to establish the well-posedness
of the above system and the following estimates.
\begin{lemma}
Under the assumptions of Lemma 3.2, we have
\begin{align*}
\|\nabla\bbu^h\|^2+e^{-2\alpha t}\int_{t_0}^t e^{2\alpha t}\|\td_h\bbu^h\|^2ds \le K,~~~
\|\td_h\bbu^h\| \le K,
\end{align*}
where the constant depends on $\bu_0$ and $\f$.
\end{lemma}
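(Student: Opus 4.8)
The plan is to regard (\ref{dwfjHl}) as a \emph{linear} evolution problem for $\bbu=\bby+\bbz$, in which the trilinear term $b(\bu_h,\bu_h,\cdot)$ is a prescribed forcing. Because we work for $t\ge t_0>0$, Lemma~\ref{est.uh} (notably (\ref{est.uh3})) bounds $\|\bu_h\|,\|\nabla\bu_h\|$ and $\|\td_h\bu_h\|$ by a constant depending only on the data and $t_0$, so Lemma~\ref{nonlin} makes $\bphi\mapsto b(\bu_h,\bu_h,\bphi)$ a bounded functional on $\bL^2$ with norm $\le K$; I take the kernel $\beta$ and its derivative to be bounded, so that the memory term is a lower-order perturbation. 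A useful preliminary reduction is that, since $\bJ_h=\bJ_H\oplus\bJ_h^H$ and $\bby=P_H\bbu$, the coupled pair (\ref{dwfjHl}) is equivalent to the single relation $(\bby_t,\bphi_h)+\nu a(\bbu,\bphi_h)+\int_{t_0}^t\beta(t-s)a(\bbu(s),\bphi_h)\,ds=-b(\bu_h,\bu_h,\bphi_h)+(\f,\bphi_h)$ for all $\bphi_h\in\bJ_h$. Throughout I will use the identities $a(\bv,\td_h\bw)=(\td_h\bv,\td_h\bw)$ and $a(\bv,\td_h\bv)=\|\td_h\bv\|^2$ for $\bv,\bw\in\bJ_h$, the Poincar\'e-type bound $\lambda_1\|\nabla\bv\|^2\le\|\td_h\bv\|^2$, and the strengthened orthogonality (\ref{jhH2})--(\ref{jhH3}).

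First I would establish two lower-order bounds by mimicking the argument of Lemma~\ref{est1.H}. Testing the equivalent relation with $\bphi_h=\bbu$, multiplying by $e^{2\alpha t}$, and applying Poincar\'e and a kickback on $\nu\|\nabla\bbu\|^2$ gives $\|\bby(t)\|^2+e^{-2\alpha t}\int_{t_0}^t e^{2\alpha s}\|\nabla\bbu\|^2\,ds\le K$, the memory contribution being absorbed by Cauchy--Schwarz in time and Gronwall. Next, differentiating the relation in $t$ and testing with $\bphi_h=\bbu_t$, the $\bL^2$-orthogonality $\big(P_H\bbu_{tt},(I-P_H)\bbu_t\big)=0$ collapses the time term to $\tfrac12\frac{d}{dt}\|\bby_t\|^2$ and produces $\nu\|\nabla\bbu_t\|^2$; hence $\|\bby_t(t)\|^2+\int_{t_0}^t\|\nabla\bbu_t\|^2\,ds\le K$, provided I control $\partial_t b(\bu_h,\bu_h,\cdot)=b(\bu_{ht},\bu_h,\cdot)+b(\bu_h,\bu_{ht},\cdot)$ (using the a priori bounds on $\bu_{ht}$ from the higher regularity theory of the Galerkin solution and $\f_t\in L^\infty(\bL^2)$ from (${\bf A2}$)) and the kernel term $\beta'$. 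Here I use that at $t=t_0$ the data $\bby(t_0)=P_H\bu_h(t_0)$ and the elliptically determined $\bbz(t_0)$ lie in the discrete $H^2$, by (\ref{est.uh3}), so no singularity is incurred.

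For the stated estimate I would then test the equivalent relation with $\bphi_h=\td_h\bbu$. The viscous term becomes $\nu\|\td_h\bbu\|^2$, while $(\bby_t,\td_h\bbu)=a(\bby_t,\bbu)=\tfrac12\frac{d}{dt}\|\nabla\bby\|^2+a(\bby_t,\bbz)$; the cross term is written as $\frac{d}{dt}a(\bby,\bbz)-a(\bby,\bbz_t)$, the total derivative is integrated, and $\int a(\bby,\bbz_t)$ is bounded via (\ref{jhH2}) by the space-time quantities $\int\|\nabla\bby\|^2$ and $\int\|\nabla\bbz_t\|^2$ already controlled in the previous step. Multiplying by $e^{2\alpha t}$, kicking back the $\alpha$-term through $\lambda_1\|\nabla\bbu\|^2\le\|\td_h\bbu\|^2$ and (\ref{jhH3}), and bounding the nonlinear functional by $K\|\td_h\bbu\|$ and the memory term (now $\int\beta(t-s)(\td_h\bbu(s),\td_h\bbu(t))\,ds$) through the lower-order space-time bound, I obtain $e^{-2\alpha t}\int_{t_0}^t e^{2\alpha s}\|\td_h\bbu\|^2\,ds\le K$. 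Finally, the pointwise bound follows by reading the relation at fixed $t$: $\nu\|\td_h\bbu\|^2\le\big(\|\bby_t\|+\|b(\bu_h,\bu_h,\cdot)\|+\|\f\|\big)\|\td_h\bbu\|+\big(\int_{t_0}^t|\beta(t-s)|\,\|\td_h\bbu(s)\|\,ds\big)\|\td_h\bbu\|$, where every factor multiplying $\|\td_h\bbu\|$ is bounded by the two previous steps; absorbing $\|\td_h\bbu\|$ yields $\|\td_h\bbu(t)\|\le K$, and then $\|\nabla\bbu(t)\|\le\lambda_1^{-1/2}\|\td_h\bbu(t)\|\le K$ completes the first inequality.

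The step I expect to be the main obstacle is the time-differentiated estimate: controlling $\partial_t b(\bu_h,\bu_h,\cdot)$ forces one to invoke a priori bounds on $\bu_{ht}$, i.e.\ one regularity level of the semidiscrete Galerkin solution beyond Lemma~\ref{est.uh}, and simultaneously requires the memory kernel $\beta$ to be differentiable with bounded $\beta'$. The second delicate point, running throughout, is that $\bJ_H$ and $\bJ_h^H$ are orthogonal only in $\bL^2$ and not in the $a$-inner product, so each cross term (for instance $a(\bby_t,\bbz)$) must be absorbed through (\ref{jhH2})--(\ref{jhH3}) while keeping the constant $\rho$ uniform in $h$ and $H$; it is this $h$-uniformity, together with the non-local coupling introduced by the memory term, that prevents a purely local Gronwall argument and makes the ordering (lower-order, then time-differentiated, then $H^2$) essential.
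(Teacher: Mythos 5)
The first thing to note is that the paper contains no proof of this lemma at all: it appears immediately after the sentence ``Being linear it is easy to establish the well-posedness of the above system and the following estimates,'' and nothing further is offered. So there is no argument in the paper to compare yours against; what can be judged is whether your proposal is a correct realization of the omitted argument, and in outline it is. Your reduction of the coupled pair (\ref{dwfjHl}) to a single relation over all of $\bJ_h$ is legitimate, because $\bby_t\in\bJ_H$ and the splitting (\ref{space.split}) is $\bL^2$-orthogonal, so $(\bby_t,\bphi)=(\bby_t,\bphi_h)$. The three-stage energy argument (test with $\bbu$; differentiate in time and test with $\bbu_t$; test with $\td_h\bbu$ and finally read the equation pointwise) is the standard route to discrete-$H^2$ bounds for a linear parabolic problem with smooth forcing, and $b(\bu_h,\bu_h,\cdot)$ is an admissible forcing thanks to Lemma \ref{nonlin} combined with Lemma \ref{est.uh} for $t\ge t_0>0$.

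Three points need repair or at least honesty. First, in your third step the integration by parts $a(\bby_t,\bbz)=\frac{d}{dt}a(\bby,\bbz)-a(\bby,\bbz_t)$ produces the boundary term $a(\bby(t),\bbz(t))$, which by (\ref{jhH2}) is controlled only by $\|\nabla\bby(t)\|\,\|\nabla\bbz(t)\|$ --- and a pointwise bound on $\|\nabla\bbz(t)\|$ is exactly what you do not yet possess at that stage. The fix is to skip the integration by parts and apply Cauchy--Schwarz in time directly to $\int_{t_0}^{t}e^{2\alpha s}a(\bby_s,\bbz)\,ds$, using (\ref{jhH2}) together with $\int e^{2\alpha s}\|\nabla\bby_s\|^2ds$ (your second step, via (\ref{jhH3})) and $\int e^{2\alpha s}\|\nabla\bbz\|^2ds$ (your first step); then the only pointwise quantity appearing is $\|\nabla\bby(t)\|^2$, which sits harmlessly on the left. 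Second, the memory kernel: the paper never defines $\beta$, and its own error equations (\ref{bxi12}), obtained by subtracting (\ref{dwfjHl}) from (\ref{dwfjH}), contain no memory term, so $\beta$ is evidently vestigial. If you do carry it, mere boundedness is not enough for a $t$-uniform constant: in your pointwise step, estimating $\int_{t_0}^{t}|\beta(t-s)|\,\|\td_h\bbu(s)\|\,ds$ through the weighted bound $e^{-2\alpha t}\int e^{2\alpha s}\|\td_h\bbu\|^2ds\le K$ leaves a factor $e^{\alpha(t-t_0)}$ unless $\beta$ decays exponentially at a rate exceeding $\alpha$ (or vanishes). Third, your time-differentiated step invokes bounds on $\bu_{ht}$ and a discrete-$H^2$ bound on $\bbu(t_0)$ that are not contained in Lemma \ref{est.uh} as stated; they are available from the Heywood--Rannacher theory away from $t=0$, and the term $b(\bu_h,\bu_{ht},\bbu_t)$ is best rewritten as $-b(\bu_h,\bbu_t,\bu_{ht})$ so that only $\|\bu_{ht}\|$, not its gradient, is required --- but this extra dependence should be acknowledged, since the lemma as stated assumes only ``the assumptions of Lemma 3.2.''
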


\subsection{NLGM I}

\noindent We define
$$ \e :=\bu_h-\bu^h=(\by_H-\by^H)+(\bz_h-\bz^h) =: \e_1+\e_2. $$
We further split the errors as follows:
\begin{align} \label{err.split}
\left\{\begin{array}{rcl}
\e_1 &=& \by_H-\by^H = (\by_H-\bby)-(\by^H-\bby) =\bxi_1-\bta_1 \in\bJ_H \\
\e_2 &=& \bz_h-\bz^h = (\bz_h-\bbz)-(\bz^h-\bbz) =\bxi_2-\bta_2 \in\bJ_h^H.
\end{array}\right.
\end{align}
For the sake of simplicity, we write
$$ \bxi=\bxi_1+\bxi_2,~~~\bta=\bta_1+\bta_2. $$
And the equations in $\bxi$ and $\bta$: subtract (\ref{dwfjHl}) from (\ref{dwfjH})
and subtract (\ref{dwfjHl}) from (\ref{dwfJH}) to obtain
\begin{align} \label{bxi12}
\left\{\begin{array}{rl}
(\bxi_{1,t},\bphi)+& \nu a (\bxi,\bphi)=0 \\
& \nu a (\bxi,\bchi)= -(\bz_{ht},\bchi),
\end{array}\right.
\end{align}
\begin{align} \label{bta12}
\left\{\begin{array}{rl}
(\bta_{1,t},\bphi) +& \nu a (\bta,\bphi)= b(\bu_h,\bu_h,\bphi)-b(\bu^h,\bu^h,\bphi) \\
& \nu a (\bta,\bchi)= b(\bu_h,\bu_h,\bchi)-b(\bu^h,\bu^h,\bchi),
\end{array}\right.
\end{align}
for $\bphi\in\bJ_H$ and $\bchi\in\bJ_h^H.$
\begin{lemma}\label{l2l2.bxi}
Under the assumptions of Lemma \ref{est.uh}, the following holds.
\begin{equation}\label{l2l2.bxi1}
e^{-2\alpha t}\int_{t_0}^t e^{2\alpha \tau}\|\bxi(\tau)\|^2d\tau \le K(t)H^8.
\end{equation}
\end{lemma}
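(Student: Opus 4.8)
The plan is to split $\bxi=\bxi_1+\bxi_2$ with $\bxi_1\in\bJ_H$, $\bxi_2\in\bJ_h^H$ and exploit their $\bL^2$-orthogonality, so that $\|\bxi\|^2=\|\bxi_1\|^2+\|\bxi_2\|^2$. First I would run the basic energy estimate: test the first equation of \eqref{bxi12} with $\bphi=\bxi_1$ and the second with $\bchi=\bxi_2$, then add (the terms combine to $\nu a(\bxi,\bxi_1+\bxi_2)=\nu\|\nabla\bxi\|^2$), obtaining $\tfrac12\tfrac{d}{dt}\|\bxi_1\|^2+\nu\|\nabla\bxi\|^2=-(\bz_{ht},\bxi_2)$. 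Bounding the right-hand side by $\|\bz_{ht}\|\,\|\bxi_2\|\le cH\|\bz_{ht}\|\,\|\nabla\bxi\|$ via \eqref{jhH1} and \eqref{jhH3}, absorbing $\|\nabla\bxi\|$, inserting the weight $e^{2\alpha t}$, using Poincar\'e and a kickback (permissible since $0<\alpha<\nu\lambda_1$) for the resulting $2\alpha\|\bxi_1\|^2$ term, and invoking $\|\bz_{ht}\|\le K(t)H^2$ from Lemma \ref{bzh.est}, I obtain $e^{-2\alpha t}\int_{t_0}^t e^{2\alpha s}\|\nabla\bxi\|^2\,ds\le K(t)H^6$. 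The boundary term here vanishes because $\bxi_1(t_0)=\by_H(t_0)-\bby(t_0)=0$ by the choice $\bby(t_0)=\by_H(t_0)$.

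The component $\bxi_2$ is then immediate: by \eqref{jhH1} and \eqref{jhH3}, $\|\bxi_2\|\le cH\|\nabla\bxi_2\|\le cH\rho^{-1/2}\|\nabla\bxi\|$, so $e^{-2\alpha t}\int_{t_0}^t e^{2\alpha s}\|\bxi_2\|^2\,ds\le cH^2\cdot K(t)H^6=K(t)H^8$. The coarse component $\bxi_1\in\bJ_H$ carries no such factor $H$, and the energy estimate alone only gives $\|\bxi_1\|\sim H^3$; the missing power of $H$ must be recovered by a duality (Aubin--Nitsche) argument, which is the crux of the proof.

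For the duality step I would introduce the backward dual Stokes problem $-\bw_s-\nu\td\bw=\bxi_1$ on $(t_0,t)$ with $\bw(t)=0$ and $\bw\in\bJ_1$; by maximal parabolic regularity together with the elliptic regularity (${\bf A1}$) one has $\int_{t_0}^t\|\td\bw\|^2\,ds\le C\int_{t_0}^t\|\bxi_1\|^2\,ds$. Writing $\int\|\bxi_1\|^2=\int\big[(\bxi_1,-\bw_s)+\nu a(\bxi_1,\bw)\big]$, integrating the first term by parts in time (both boundary contributions vanish since $\bw(t)=0$ and $\bxi_1(t_0)=0$), and then using the first equation of \eqref{bxi12} with $\bphi=P_H\bw\in\bJ_H$ to replace $(\bxi_{1,s},\bw)=(\bxi_{1,s},P_H\bw)=-\nu a(\bxi,P_H\bw)$, I arrive at the representation $\int\|\bxi_1\|^2=\nu\int\big[a(\bxi_1,\bw-P_H\bw)-a(\bxi_2,P_H\bw)\big]\,ds$.

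The decisive point is that both terms carry an extra factor $H$ once measured against $\|\td\bw\|$. For the first, the $H^1$-approximation property of $P_H$ (the $\bJ_H$-analogue of \eqref{ph2}) gives $\|\nabla(\bw-P_H\bw)\|\le CH\|\td\bw\|$, hence $|a(\bxi_1,\bw-P_H\bw)|\le CH\|\nabla\bxi_1\|\,\|\td\bw\|$. For the second I would \emph{not} use the crude bound \eqref{jhH2} (which loses the $H$), but instead write $a(\bxi_2,P_H\bw)=a(\bxi_2,\bw)-a(\bxi_2,\bw-P_H\bw)$ and compute $a(\bxi_2,\bw)=(\bxi_2,-\Delta\bw)=(\bxi_2,\td\bw)+(\bxi_2,\nabla q)$ by parts in space; here $|(\bxi_2,\td\bw)|\le\|\bxi_2\|\,\|\td\bw\|\le cH\|\nabla\bxi_2\|\,\|\td\bw\|$ by \eqref{jhH1}, while the pressure term, nonzero because $\bxi_2$ is only discretely divergence-free, is controlled through $(\bxi_2,\nabla q)=-(\nabla\cdot\bxi_2,q-j_hq)$ using (${\bf B1}$) and the Stokes bound $\|q\|_1\le C\|\td\bw\|$ from (${\bf A1}$), giving $\le ch\|\nabla\bxi_2\|\,\|\td\bw\|$. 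Collecting, $\int\|\bxi_1\|^2\le CH\int(\|\nabla\bxi_1\|+\|\nabla\bxi_2\|)\|\td\bw\|\,ds$; Cauchy--Schwarz in time, the regularity bound on $\|\td\bw\|$, and the energy estimate $\int(\|\nabla\bxi_1\|^2+\|\nabla\bxi_2\|^2)\le K(t)H^6$ (via \eqref{jhH3}) then yield $\int\|\bxi_1\|^2\le K(t)H^8$. Since $e^{2\alpha s}$ is bounded above and below on $[t_0,t]$, the weighted statement \eqref{l2l2.bxi1} follows after absorbing these bounds into $K(t)$. I expect the main obstacle to be precisely this extraction of the extra $H$ from the coupling term $a(\bxi_2,P_H\bw)$: it forces one to pass through the $\bL^2$-pairing $(\bxi_2,\td\bw)$ rather than the energy pairing, and to absorb the pressure defect coming from the failure of $\bJ_h$ to be pointwise divergence-free.
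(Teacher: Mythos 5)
Your route is genuinely different from the paper's, and in outline it can be made to work. The paper never poses a continuous dual problem: it introduces a \emph{discrete} backward problem (\ref{bp1}) in $\bJ_h=\bJ_H+\bJ_h^H$ whose structure mirrors the error system (time derivative only on the coarse component, purely stationary equation on the fine component), tests it against $(\bxi_1,\bxi_2)$ and the error system against $(\bw_1,\bw_2)$, and gets the exact identity $\int_{t_0}^t\|\hxi\|^2d\tau=\int_{t_0}^t(\bz_{ht},\bw_2)d\tau$ once the boundary terms vanish. The entire $H^4$ gain then comes from a single term: $(\bz_{ht},\bw_2)=(\bz_{ht},\bw-P_H\bw)\le K(t)H^2\cdot cH^2\|\bw\|_2$, i.e.\ Lemma \ref{bzh.est} times the $\bL^2$-approximation property (\ref{ph2}) at level $H$, closed by the dual regularity (\ref{bp1.est}). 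In particular the paper treats the full $\bxi$ at once in the duality step and does not consume the energy estimate there, whereas you bound $\bxi_2$ directly by (\ref{jhH1}) and run duality only for $\bxi_1$, extracting the extra power of $H$ from $\|\nabla(\bw-P_H\bw)\|\le CH\|\td\bw\|$ together with $\int\|\nabla\bxi\|^2\le K(t)H^6$; the term $\bz_{ht}$ never enters your identity because you only use the coarse error equation. Your version buys textbook parabolic regularity for the dual problem; the paper's buys a pairing with no divergence-free mismatch, at the cost of asserting well-posedness and $\bH^2$-regularity of a nonstandard degenerate discrete backward system.

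The gaps in your argument all stem from the fact that $\bJ_H$ and $\bJ_h^H$ are only \emph{discretely} divergence-free ($\bJ_h\not\subset\bJ_1$), and you apply this observation to $\bxi_2$ but not to $\bxi_1$. Since $\bxi_1\notin\bJ$, the continuous dual problem can only carry the data $P\bxi_1$ ($P$ the Leray projection), so your pairing produces $\int\|P\bxi_1\|^2$, not $\int\|\bxi_1\|^2$; to recover the full norm you need the standard estimate $\|(I-P)\bxi_1\|\le CH\|\nabla\bxi_1\|$ (proved by exactly the defect trick you use: $(\nabla\cdot\bxi_1,q)=(\nabla\cdot\bxi_1,q-j_Hq)$ with (${\bf B1}$) at level $H$), which contributes $CH^2\int\|\nabla\bxi_1\|^2\le K(t)H^8$ and is therefore harmless --- but it is missing. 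Likewise, the pressure defect you identify for $\bxi_2$ also occurs for $\bxi_1$: in $(\bxi_1,\td\bw)=a(\bxi_1,\bw)-(\bxi_1,\nabla q)$ the term $(\bxi_1,\nabla q)=-(\nabla\cdot\bxi_1,q-j_Hq)\le CH\|\nabla\bxi_1\|\,\|\td\bw\|$ must be carried along, and your representation formula omits it. Finally, a sign slip: the backward problem must read $-\bw_s+\nu\td\bw=P\bxi_1$, $\bw(t)=0$ (as written, $-\bw_s-\nu\td\bw=\bxi_1$ is the anti-dissipative direction and is ill-posed with final data). None of these is fatal --- every repair term is of the same order $CH\|\nabla\bxi\|\,\|\td\bw\|$ you already control --- but as written the key identity $\int\|\bxi_1\|^2=\nu\int\big[a(\bxi_1,\bw-P_H\bw)-a(\bxi_2,P_H\bw)\big]ds$ is not exact.
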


\begin{proof}
Choose $\bphi=e^{2\alpha t}\bxi_1,~\bchi=e^{2\alpha t}\bxi_2$ in (\ref{bxi12}), add the
two resulting equations and with the notation $\hxi=e^{\alpha t}\bxi$, we get
\begin{equation}\label{l2l2.bxi01}
\frac{1}{2}\frac{d}{dt}\|\hxi_1\|^2-\alpha\|\hxi_1\|^2+\nu\|\hxi\|_1^2 \le e^{\alpha t} \|\bz_{ht}\|\|\hxi_2\|.
\end{equation}
Using (\ref{jhH1}) and (\ref{bzh.est1}), we can bound the right-hand side as:
$$ \le e^{\alpha t}.K(t)H^2.cH\|\hxi_2\|_1 \le \frac{\nu\rho}{2}\|\hxi_2\|_1^2
+K(t)H^6.e^{2\alpha t}. $$
And using (\ref{jhH3}), we have
$$ -\alpha\|\hxi_1\|^2+\nu\|\hxi\|_1^2 \ge (\nu\rho-\frac{\alpha}{\lambda_1})
\|\hxi_1\|_1^2+\nu\rho\|\hxi_2\|_1^2. $$
As a result, we obtain from (\ref{l2l2.bxi01})
\begin{equation}\label{l2l2.bxi02}
\frac{d}{dt}\|\hxi_1\|^2+2(\nu\rho-\frac{\alpha}{\lambda_1})\|\hxi_1\|_1^2
+\nu\rho\|\hxi_2\|_1^2 \le K(t)H^6.e^{2\alpha t}.
\end{equation}
Integrate from $t_0$ to $t$ and multiply the resulting inequality by $e^{-2\alpha t}$.
\begin{equation}\label{l2l2.bxi.2}
\|\bxi_1\|^2+e^{-2\alpha t}\int_{t_0}^t (\|\hxi_1\|_1^2+\|\hxi_2\|_1^2)~ds \le K(t)H^6.
\end{equation}
To obtain $L^2(\bL^2)$-norm estimate, we consider the following discrete backward
problem: for fixed $t_0$, let $\bw(\tau)\in \bJ_h,~\bw=\bw_1+\bw_2$ such that $\bw_1
\in \bJ_H,~\bw_2\in \bJ_h^H$ be the unique solution of ($t_0\le \tau <t$)
\begin{equation}\label{bp1} 
\left\{\begin{array}{rcl}
(\bphi,\bw_{1,\tau}) &-&\nu a(\bphi,\bw) = e^{2\alpha \tau}(\bphi,\bxi_1) \\
&-& \nu a(\bchi,\bw) = e^{2\alpha \tau}(\bchi,\bxi_2) \\
&& \bw_1(t)=0.
\end{array}\right.
\end{equation}
With change of variable, we can make it a forward problem and it turns out to be a linearized
version of (\ref{dwfJH}) and hence is well-posed. The following regularity result holds.
\begin{equation}\label{bp1.est}
\int_{t_0}^t e^{-2\alpha \tau}\|\bw\|_2^2d\tau \le C\int_{t_0}^t \|\hxi\|^2d\tau.
\end{equation}
Now, choose $\bphi=\bxi_1,~\bchi=\bxi_2$ in (\ref{bp1}) and use (\ref{bxi12}) with $\bphi=\bw_1,
~\bchi=\bw_2$ to find that
\begin{align*}
\|\hxi(\tau)\|^2 &= (\bxi_1,\bw_{1,\tau})-\nu a (\bxi,\bw) \\
&\le \frac{d}{dt} (\bxi_1,\bw_1)+(\bz_{ht},\bw_2).
\end{align*}
Integrate from $t_0$ to $t$.
\begin{equation}\label{l2l2.bxi03}
\int_{t_0}^t \|\hxi(\tau)\|^2d\tau =((\bxi_1(t),\bw_1(t))-(\bxi_1(t_0),\bw_1(t_0))
+\int_{t_0}^t (\bz_{ht},\bw_2) d\tau.
\end{equation}
But $\bw_1(t)=0$ and $\bxi_1(t_0)=\by_H(t_0)-\bby(t_0)=0$. Next, we observe that
\begin{equation*}
\left.\begin{array}{c}
\bw_1\in\bJ_H \implies P_H\bw_1=\bw_1, ~~\mbox{i.e.,}~~ \bw_1-P_H\bw_1=0 \\
\bw_2\in\bJ_h^H \implies P_H\bw_2=0, ~~\mbox{i.e.,}~~ \bw_2-P_H\bw_2=\bw_2
\end{array}\right\}
\implies \bw-P_H\bw=\bw_2
\end{equation*}
Therefore,
\begin{align}\label{new.est}
(\bz_{ht},\bw_2) &=(\bz_{ht},\bw-P_H\bw) \nonumber \\
& \le \|\bz_{ht}\|\|\bw-\Phi_H\bw\| \le K(t)H^2.cH^2\|\bw\|_2.
\end{align}
From (\ref{l2l2.bxi03}), we get
$$ \int_{t_0}^t \|\hxi(\tau)\|^2d\tau \le K(t) e^{2\alpha t}H^4\Big(\int_{t_0}^t
e^{-2\alpha \tau}\|\bw\|_2^2d\tau\Big)^{1/2}. $$
Use (\ref{bp1.est}) to conclude.
\end{proof}
In order to obtain optimal $L^{\infty}(\bL^2)$ estimate, we would like to introduce
Stokes-type projections $(S_H,S_h^H)$ for $t>t_0$ defined as below:
$$ S_H:\bJ_h\to \bJ_H,~~S_h^H:\bJ_h\to \bJ_h^H, $$
and with the notations
$$\bzt_1 :=\by_H-S_H\bu_h \in \bJ_H,~~\bzt_2 :=\bz_h-S_h^H\bu_h\in\bJ_h^H $$
the following system is satisfied.
\begin{align} \label{bzt12}
\left\{\begin{array}{rl}
\nu a (\bzt,\bphi)=0,~\bphi\in\bJ_H, \\
\nu a (\bzt,\bchi)= -(\bz_{ht},\bchi),~\bchi\in\bJ_h^H.
\end{array}\right.
\end{align}
For the sake of convenience, we have written $\bzt=\bzt_1+\bzt_2.$ Note that, given a
semi-discrete Galerkin approximation $\bu_h$ of NSE with {\it a priori} estimates, the system
(\ref{bzt12}) is a Stokes system, with Stokes problem in $\bJ_h$ projected to subspaces $\bJ_H$
and $\bJ_h^H$, and hence is well-posed.
\begin{lemma}\label{l2.bzt}
Under the assumptions of Lemma \ref{est.uh}, we have
\begin{equation}\label{l2.bzt1}
\|\bzt\|+\|\bzt_t\| \le K(t)H^4.
\end{equation}
\end{lemma}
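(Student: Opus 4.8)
The plan is to recognize that the two identities defining the Stokes--type projection in (\ref{bzt12}) amount to a single discrete Stokes problem on the divergence--free space $\bJ_h$, and then to combine an energy estimate with an Aubin--Nitsche duality argument, exploiting at every stage the $\bL^2$-orthogonality $\bz_{ht}\perp\bJ_H$. First I would assemble the system: for an arbitrary $\bv_h=\bphi+\bchi\in\bJ_h$ with $\bphi=P_H\bv_h\in\bJ_H$ and $\bchi=(I-P_H)\bv_h\in\bJ_h^H$, adding the two lines of (\ref{bzt12}) gives $\nu a(\bzt,\bv_h)=-(\bz_{ht},\bchi)$; since $\bz_{ht}=(I-P_H)\bu_{ht}\in\bJ_h^H$ we have $(\bz_{ht},\bchi)=(\bz_{ht},\bv_h)$, so that $\bzt$ solves $\nu a(\bzt,\bv_h)=-(\bz_{ht},\bv_h)$ for all $\bv_h\in\bJ_h$. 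The only external input needed is $\|\bz_{ht}\|\le K(t)H^2$ from Lemma \ref{bzh.est}.

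The \emph{key observation} is that the energy estimate already yields one order more than the naive bound. Taking $\bv_h=\bzt=\bzt_1+\bzt_2$ and using $\bz_{ht}\perp\bJ_H$ annihilates the coarse part, leaving $\nu\|\nabla\bzt\|^2=-(\bz_{ht},\bzt_2)\le\|\bz_{ht}\|\,\|\bzt_2\|$. Because $\bzt_2\in\bJ_h^H$, (\ref{jhH1}) supplies an extra factor $H$, namely $\|\bzt_2\|\le cH\|\bzt_2\|_1$, while (\ref{jhH3}) gives $\|\bzt_2\|_1\le\rho^{-1/2}\|\nabla\bzt\|$; dividing through produces $\|\nabla\bzt\|\le K(t)H^3$ (and hence already $\|\bzt_2\|\le K(t)H^4$). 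This upgrade of the gradient bound from $H^2$ to $H^3$ is precisely what will make the duality step close at $H^4$ rather than at $H^3$.

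Finally I would run duality on the full $\bzt$. Let $(\bw,\pi)\in(\bJ_1\cap\bH^2)\times(H^1/\R)$ solve the continuous Stokes problem with data $\bzt$, so that assumption (${\bf A1}$) gives $\|\bw\|_2\le C\|\bzt\|$. Writing $\|\bzt\|^2=\nu a(\bzt,\bw)-(\nabla\cdot\bzt,\pi)$ and inserting $P_h\bw\in\bJ_h$, I split $\nu a(\bzt,\bw)=\nu a(\bzt,\bw-P_h\bw)+\nu a(\bzt,P_h\bw)$. The Galerkin term equals $-(\bz_{ht},P_h\bw)=-(\bz_{ht},\bw)=-(\bz_{ht},\bw-P_H\bw)$, after using $\bz_{ht}\in\bJ_h$ and then $\bz_{ht}\perp\bJ_H$; with $\|\bz_{ht}\|\le K(t)H^2$ and the coarse analogue of (\ref{ph2}), $\|\bw-P_H\bw\|\le CH^2\|\bw\|_2$, this is $\le K(t)H^4\|\bzt\|$. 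The approximation term satisfies $\nu a(\bzt,\bw-P_h\bw)\le\nu\|\nabla\bzt\|\,\|\nabla(\bw-P_h\bw)\|\le K(t)H^3\cdot Ch\|\bw\|_2$, which is $\le K(t)H^4\|\bzt\|$ because $h<H$; here the improved $H^3$ bound is indispensable. The pressure defect is controlled by the discrete incompressibility of $\bzt$: subtracting $j_h\pi\in L_h$, $|(\nabla\cdot\bzt,\pi)|=|(\nabla\cdot\bzt,\pi-j_h\pi)|\le C\|\nabla\bzt\|\cdot h\|\pi\|_1\le K(t)H^4\|\bzt\|$. Collecting the three contributions gives $\|\bzt\|^2\le K(t)H^4\|\bzt\|$, i.e.\ $\|\bzt\|\le K(t)H^4$. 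For the time derivative I would differentiate the elliptic system (\ref{bzt12}) in $t$ (the subspaces $\bJ_H,\bJ_h^H$ are $t$-independent), obtaining the same problem with $\bz_{htt}$ in place of $\bz_{ht}$; since Lemma \ref{bzh.est} also yields $\|\bz_{htt}\|\le K(t)H^2$, the identical argument gives $\|\bzt_t\|\le K(t)H^4$.

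The main obstacle is the duality step. A plain energy estimate only delivers $\|\nabla\bzt\|=O(H^2)$, in which case the approximation term $\nu a(\bzt,\bw-P_h\bw)$ is merely $O(H^3)$ and one cannot reach $H^4$. The resolution is the sharpened bound $\|\nabla\bzt\|=O(H^3)$, which rests on the fact that the right-hand side $\bz_{ht}$ lives in $\bJ_h^H$ and is $\bL^2$-orthogonal to the coarse space; this orthogonality, together with the Poincar\'e--type gain (\ref{jhH1}) on $\bJ_h^H$, is what drives the entire estimate. A secondary technical nuisance is that $\bJ_h\not\subset\bJ_1$, so the continuous--dual representation carries the divergence defect $(\nabla\cdot\bzt,\pi)$; this is harmless because $\bzt$ is discretely divergence free and may be tested against $\pi-j_h\pi$.
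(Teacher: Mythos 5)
Your proposal is correct, and it follows the same overall strategy as the paper: first an energy estimate that upgrades the gradient bound to $\|\bzt\|_1\le K(t)H^3$ (the paper's (\ref{l2.bzt3})), then an Aubin--Nitsche duality argument whose engine is exactly the orthogonality you identify, namely that $\bz_{ht}\in\bJ_h^H$ pairs only with the fine component of the dual solution, so that $(\bz_{ht},\bw-P_H\bw)\le K(t)H^2\cdot CH^2\|\bw\|_2$, and finally differentiation in time together with $\|\bz_{htt}\|\le K(t)H^2$ from Lemma \ref{bzh.est} to treat $\bzt_t$. The one genuine difference is the choice of dual problem. The paper poses the dual Stokes problem \emph{discretely} in $\bJ_h$, split into its $\bJ_H$ and $\bJ_h^H$ components (\ref{dual.prob1}); testing then gives the exact identity $\|\hzt\|^2=\nu a(\hzt,\bw_h)=-e^{2\alpha t}(\bz_{ht},\bw_{h2})$, with no consistency term and no pressure term, so the $H^3$ gradient bound is not even needed in that step --- but the paper must invoke the regularity (\ref{dual.reg1}), $\|\bw_h\|_2\le c\|\hzt\|$, for a \emph{finite element} function, which is a discrete elliptic regularity claim that the paper passes over as ``standard.'' You instead pose the dual problem at the continuous level, so its regularity is literally assumption (${\bf A1}$) and nothing discrete needs to be justified; the price is two extra terms, the consistency term $\nu a(\bzt,\bw-P_h\bw)$ and the pressure defect $(\nabla\cdot\bzt,\pi)$, each of size $O(H^3h)$, which is why your argument genuinely requires the sharpened bound $\|\nabla\bzt\|\le K(t)H^3$ together with $h<H$ (available, since the paper assumes $0<h\ll H$) to close at $O(H^4)$. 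Your preliminary reformulation $\nu a(\bzt,\bv_h)=-(\bz_{ht},\bv_h)$ for all $\bv_h\in\bJ_h$ is a correct and convenient repackaging of (\ref{bzt12}), and your observation that the energy estimate alone already gives $\|\bzt_2\|\le K(t)H^4$ (so that duality is really only needed for the coarse part) is a nice point the paper does not make explicit.
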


\begin{proof}
Choose $\bphi= e^{2\alpha t}\bzt_1,~\bchi=e^{2\alpha t}\bzt_2$ in (\ref{bzt12}) to obtain
\begin{equation}\label{l2.bzt01}
 \nu\|\hzt\|_1^2 \le e^{\alpha t}\|\bz_{ht}\|\|\hzt_2\|.
\end{equation}
As in (\ref{l2l2.bxi01}), we establish
\begin{equation}\label{l2.bzt3}
\|\bzt\|_1^2 \le \|\bzt_1\|_1^2+\|\bzt_2\|_1^2 \le K(t)H^6.
\end{equation}
In order to obtain optimal $L^{\infty}(\bL^2)$-norm estimate, we would use
Aubin-Nitsche duality argument. For that purpose, we consider the following
Galerkin approximation of steady Stoke problem: let $\bw_h\in\bJ_h$ be the solution of
$$ \nu a(\bv,\bw_h)=(\bv,\hzt_1+\hzt_2),~\bv\in\bJ_h. $$
Writing $\bw_{h1}=P_H\bw_h,~\bw_{h2}=(I-P_H)\bw_h$, we split the above equation as
\begin{equation}\label{dual.prob1}
\left\{\begin{array}{rcl}
\nu a(\bphi,\bw_{h1}) &=& (\bphi,\hzt_1),~\bphi\in\bJ_H, \\
\nu a(\bchi,\bw_{h2}) &=& (\bchi,\hzt_2),~\bchi\in \bJ_h^H.
\end{array}\right.
\end{equation}
Standard elliptic regularity leads us to the following result.
\begin{equation}\label{dual.reg1}
\|\bw_h=\bw_{h1}+\bw_{h2}\|_2 \le c\|\hzt_1+\hzt_2\|. 
\end{equation}
Now, put $\bphi=\hzt_1,~\bchi=\hzt_2$ in (\ref{dual.prob1}) and use (\ref{bzt12})
with $\bphi=\bw_{h1},~\bchi=\bw_{h2}$ to find that
\begin{align*}
\|\hzt\|^2 =\nu a(\hzt,\bw_h)= -e^{2\alpha t}(\bz_{ht},\bw_{h2}).
\end{align*}
As in (\ref{new.est}) along with (\ref{dual.reg1}), we find
$$ \|\bzt\| \le K(t)H^4. $$
For the remaining part, we differentiate (\ref{bzt12}) and proceed as above to complete the rest of the proof.
\end{proof}
Now we are in a position to estimate $L^{\infty}(\bL^2)$-norm of $\bxi$, that is,
of $\bxi_1$ and $\bxi_2$. Using the definitions of $\bxi_i,\bzt_i,~i=1,2$, we write
\begin{equation*}
\left\{\begin{array}{rcl}
\bxi_1 &=& \by_H-\bby= (\by_H-S_H\bu_h)-(\bby-S_H\bu_h) =:\bzt_1-\bth_1, \\
\bxi_2 &=& \bz_h-\bbz= (\bz_h-S_h^H\bu_h)-(\bbz-S_h^H\bu_h) =:\bzt_2-\bth_2.
\end{array}\right.
\end{equation*}
From (\ref{bxi12}) and (\ref{bzt12}), we have
\begin{equation}\label{bth12}
\left\{\begin{array}{rcl}
(\bth_{1,t},\bphi) &+& \nu a(\bth,\bphi)=(\bzt_{1,t},\bphi),~\bphi\in\bJ_H, \\
&&\nu a(\bth,\bchi)=0,~\bchi\in\bJ_h^H.
\end{array}\right.
\end{equation}
\begin{lemma}\label{l2.bxi}
Under the assumptions Lemma \ref{est.uh}, we have
$$ \|\bxi\| \le K(t)H^4. $$
\end{lemma}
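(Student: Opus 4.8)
The plan is to use the decomposition $\bxi=\bzt-\bth$ together with the bound $\|\bzt\|\le K(t)H^4$ already established in Lemma \ref{l2.bzt}; this reduces the claim to proving $\|\bth\|\le K(t)H^4$ for the pair $\bth=\bth_1+\bth_2$ governed by (\ref{bth12}). I would first extract an energy identity for $\bth$ by choosing $\bphi=\bth_1\in\bJ_H$ in the first equation of (\ref{bth12}) and $\bchi=\bth_2\in\bJ_h^H$ in the second, and adding. Since the second equation gives $a(\bth,\bchi)=0$ for all $\bchi\in\bJ_h^H$, in particular $\nu a(\bth,\bth_2)=0$, the diffusion contributions collapse to $\nu a(\bth,\bth_1+\bth_2)=\nu\|\bth\|_1^2$, yielding
\begin{equation*}
\tfrac{1}{2}\tfrac{d}{dt}\|\bth_1\|^2+\nu\|\bth\|_1^2=(\bzt_{1,t},\bth_1).
\end{equation*}

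Next I would close a differential inequality for $\|\bth_1\|$. On the right I would estimate $|(\bzt_{1,t},\bth_1)|\le\|\bzt_{1,t}\|\,\|\bth_1\|\le K(t)H^4\|\bth_1\|$, using that $\bzt_{1,t}$ is the $\bL^2$-orthogonal coarse component of $\bzt_t$, so $\|\bzt_{1,t}\|\le\|\bzt_t\|\le K(t)H^4$ by Lemma \ref{l2.bzt}. On the left I would coerce the diffusion term from below via (\ref{jhH3}) and the Poincar\'e inequality, $\nu\|\bth\|_1^2\ge\nu\rho\|\bth_1\|_1^2\ge\nu\rho\lambda_1\|\bth_1\|^2$, and then absorb a fraction of $\|\bth_1\|^2$ from the right-hand side through Young's inequality, arriving at $\tfrac{d}{dt}\|\bth_1\|^2+c\|\bth_1\|^2\le K(t)H^8$. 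For the starting value I would invoke $\bxi_1(t_0)=0$ (as noted in the proof of Lemma \ref{l2l2.bxi}), so that $\bth_1(t_0)=\bzt_1(t_0)$ and $\|\bth_1(t_0)\|\le KH^4$; a Gronwall argument then delivers the pointwise bound $\|\bth_1(t)\|\le K(t)H^4$.

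The main obstacle is the fine-scale component $\bth_2$, because the identity above controls $\bth_1$ pointwise but only $\int_{t_0}^t\|\bth\|_1^2\,ds$ in space--time, which by itself is too weak for an $L^\infty(\bL^2)$ bound. Here I would exploit that $\bth_2$ is elliptically slaved to $\bth_1$: taking $\bchi=\bth_2$ in the second equation of (\ref{bth12}) gives $\|\bth_2\|_1^2=-a(\bth_1,\bth_2)$, whence (\ref{jhH2}) yields $\|\bth_2\|_1\le(1-\rho)\|\bth_1\|_1$, and (\ref{jhH1}) then gives $\|\bth_2\|\le cH\|\bth_1\|_1$. The crucial step is to upgrade the pointwise $\bL^2$-bound on $\bth_1$ to an $\bH^1$-bound using the inverse hypothesis (\ref{inv.hypo}) on the coarse space $\bJ_H$, namely $\|\bth_1\|_1\le cH^{-1}\|\bth_1\|\le K(t)H^3$; the factor $H^{-1}$ lost here is exactly recovered by the factor $H$ gained in passing back to $\bL^2$ on the fine space, so that $\|\bth_2\|\le cH\cdot K(t)H^3=K(t)H^4$. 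Finally, by the $\bL^2$-orthogonality of $\bJ_H$ and $\bJ_h^H$, $\|\bth\|^2=\|\bth_1\|^2+\|\bth_2\|^2\le K(t)H^8$, and combining this with $\|\bzt\|\le K(t)H^4$ through $\bxi=\bzt-\bth$ completes the proof.
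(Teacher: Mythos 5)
Your proof is correct, and it follows the paper's skeleton in all but one step: you use the same decomposition $\bxi=\bzt-\bth$ via the Stokes-type projection, the same input $\|\bzt\|+\|\bzt_t\|\le K(t)H^4$ from Lemma \ref{l2.bzt}, and the identical slaving argument for the fine-scale part ($\nu a(\bth,\bth_2)=0$, then $\|\bth_2\|_1\le(1-\rho)\|\bth_1\|_1\le cH^{-1}\|\bth_1\|$ by the inverse hypothesis, then $\|\bth_2\|\le cH\|\bth_2\|_1$ by (\ref{jhH1}), so the lost and gained factors of $H$ cancel). Where you genuinely diverge is in closing the $\bth_1$ estimate. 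The paper bounds the driving term $(\bzt_{1,t},\hth_1)$ by writing $\|\hth_1\|\le\|\hth\|\le\|\hxi\|+\|\hzt\|$, integrates in time, and applies Cauchy--Schwarz, which forces it to invoke the $L^2(\bL^2)$ estimate of $\bxi$ from Lemma \ref{l2l2.bxi} --- the lemma proved by the backward duality problem (\ref{bp1}). You instead absorb $\|\bth_1\|$ into the dissipation using the coercivity (\ref{jhH3}) together with Poincar\'e and Young's inequality, obtaining a self-contained differential inequality $\frac{d}{dt}\|\bth_1\|^2+c\|\bth_1\|^2\le K(t)H^8$ and concluding by Gronwall. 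Your route is more elementary: it makes Lemma \ref{l2l2.bxi} (and hence the entire duality argument) unnecessary for this particular statement, needing only Lemma \ref{l2.bzt}. The paper's route, on the other hand, recycles Lemma \ref{l2l2.bxi}, which it must establish anyway because the space--time bound $\int_{t_0}^t\|\hxi\|^2\,ds$ reappears in the nonlinear error estimates (e.g., in the proof of Lemma \ref{l2.err}), so within the paper's global economy neither argument is wasted. One point where you are actually more careful than the paper: you explicitly handle the initial value, noting $\bth_1(t_0)=\bzt_1(t_0)$ (since $\bxi_1(t_0)=0$) and $\|\bzt_1(t_0)\|\le\|\bzt(t_0)\|\le KH^4$ by orthogonality; the paper's integrated inequality (\ref{l2.bxi02}) silently drops this term, which requires exactly the observation you made.
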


\begin{proof}
Put $\bphi=e^{2\alpha t}\bth_1,~\bchi=e^{2\alpha t}\bth_2$ in (\ref{bth12}) to find
\begin{equation}\label{l2.bxi01}
\frac{1}{2}\frac{d}{dt}\|\hth_1\|^2-\alpha\|\hth_1\|^2+\nu\|\hth\|_1^2  \le e^{\alpha t} \|\bzt_{1,t}\|\|\hth_1\|.
\end{equation}
We recall that the spaces $J_H$ and $\bJ_h^H$ are orthogonal in $\bL^2$-inner product.
That is,
$$ \mbox{for}~\bphi\in\bJ_H,~\bchi\in\bJ_h^H,~~(\bphi,\bchi)=0. $$
Hence
$$ \|\hth_1\|^2 \le \|\hth_1\|^2+\|\hth_2\|^2= \|\hth\|^2 \le \|\hxi\|^2+\|\hzt\|^2,
~~\|\bzt_{1,t}\|^2 \le \|\bzt_t\|^2. $$
And
$$ -\alpha\|\hth_1\|^2+\nu\|\hth\|_1^2 =(\nu-\alpha\lambda_1)\|\hth_1\|_1^2
+\nu\|\hth_2\|_1^2. $$
As a result, after integrating (\ref{l2.bxi01}) with respect to time from $t_0$ to
$t$, we obtain
\begin{equation}\label{l2.bxi02}
\|\hth_1\|^2+\int_{t_0}^t \big(\|\hth_1\|_1^2+\|\hth_2\|_1^2\big)~ds \le \Big(\int_{t_0}^t
e^{2\alpha t}\|\bzt_t\|^2ds\Big)^{1/2}\Big(\int_{t_0}^t \big(\|\hxi\|^2
+\|\hzt\|^2\big)~ds\Big)^{1/2}.
\end{equation}
We now use Lemmas \ref{l2l2.bxi} and \ref{l2.bzt} to conclude from (\ref{l2.bxi02}) that
\begin{equation}\label{l2.bxi2}
\|\bth_1\|^2+e^{-2\alpha t}\int_{t_0}^t e^{2\alpha s}\big(\|\bth_1\|_1^2 +\|\bth_2\|_1^2\big)~ds \le K(t)H^8.
\end{equation}
We again choose $\bchi=e^{2\alpha t}\bth_2$ in (\ref{bth12}) to find
\begin{align*}
\nu \|\hth_2\|_1^2= -\nu a(\hth_1,\hth_2)\implies \|\bth_2\|_1 \le \|\bth_1\|_1.
\end{align*}
Since $\bth_1\in\bJ_H$, we use inverse inequality (\ref{inv.hypo}) and (\ref{l2.bxi2}) to note
that
$$ \|\bth_1\|_1 \le cH^{-1}\|\bth\| \le K(t)H^3. $$
Hence, we conclude that
$$ \|\bth_2\|_1 \le K(t)H^3. $$
Now use (\ref{jhH1}) to see that
\begin{equation}\label{l2.bxi3}
\|\bth_2\| \le K(t)H^4.
\end{equation}
Combining (\ref{l2.bxi2})-(\ref{l2.bxi3}), we establish
$$ \|\bth\| \le K(t)H^4. $$
Use triangle inequality and the estimates of $\bzt$ and $\bth$ to complete the proof.
\end{proof}
\noindent
We are now left with the estimate of $\bta$, the error due to the nonlinearity.
\begin{lemma}\label{l2.err}
Under the assumptions of Lemma \ref{est.uh} and that $H$ is small enough to satisfy (\ref{H.restrict}) and
$$ \mu\rho-2H\|\bbu\|_2 \ge 0,~~\mu-H(\|\bbu\|_2+\|\by^H\|_2) \ge 0, $$
we have
$$ \|(\bu_h-\bu^h)(t)\| \le K(t)H^4. $$
\end{lemma}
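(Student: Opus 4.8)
The plan is to prove the equivalent bound $\|\bta\|\le K(t)H^4$, since $\e=\bxi-\bta$ and Lemma \ref{l2.bxi} already supplies $\|\bxi\|\le K(t)H^4$; the triangle inequality then closes the estimate. I would work entirely from the error system \ref{bta12}. Testing its first equation with $e^{2\alpha t}\bta_1$ and its (static) second equation with $e^{2\alpha t}\bta_2$, adding, and writing $\hta=e^{\alpha t}\bta$, I obtain the weighted energy identity
\[
\tfrac12\tfrac{d}{dt}\|\hta_1\|^2-\alpha\|\hta_1\|^2+\nu\|\hta\|_1^2 = e^{2\alpha t}\big(b(\bu_h,\bu_h,\bta)-b(\bu^h,\bu^h,\bta)\big).
\]
Using the $\bL^2$-orthogonality of $\bJ_H$ and $\bJ_h^H$, Poincar\'e's inequality, and the coercivity \ref{jhH3}, the left side is bounded below by $\tfrac12\tfrac{d}{dt}\|\hta_1\|^2+\mu\rho(\|\hta_1\|_1^2+\|\hta_2\|_1^2)$, with $\mu\rho=\nu\rho-\alpha/\lambda_1>0$ the effective coercivity constant that I identify with the $\mu$ appearing in the hypotheses. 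Everything then reduces to estimating the nonlinear right-hand side.

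For that I would write $\e=\bu_h-\bu^h=\bxi-\bta$ and split, using the antisymmetry $b(\bu_h,\bta,\bta)=0$, the substitution $\bu^h=\bbu+\bta$, and $b(\bta,\bta,\bta)=0$,
\[
b(\bu_h,\bu_h,\bta)-b(\bu^h,\bu^h,\bta)=b(\bu_h,\bxi,\bta)+b(\bxi,\bu^h,\bta)-b(\bta,\bbu,\bta).
\]
The two \emph{source} terms carrying $\bxi$ must be arranged so that $\bxi$ always sits in the $\bL^2$-norm, where Lemma \ref{l2.bxi} makes it $O(H^4)$ (estimating $\bxi$ in $\bH^1$, which is only $O(H^3)$, would degrade the final rate). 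To this end I would exploit the identity $b(\bv,\bw,\bphi)=-b(\bv,\bphi,\bw)$ to move $\bxi$ out of the differentiated middle slot, and then invoke the bounds of Lemma \ref{nonlin} (the options keeping one factor in $\bL^2$ while loading the surviving derivatives onto the a~priori bounded fields via $\|\td_h\bu_h\|,\|\td_h\bu^h\|\le K$). This gives $|b(\bu_h,\bxi,\bta)|+|b(\bxi,\bu^h,\bta)|\le\tfrac{\mu\rho}{4}\|\hta\|_1^2+K\|\hxi\|^2$, and $\int_{t_0}^t\|\hxi\|^2\,ds\le K(t)H^8e^{2\alpha t}$ by Lemma \ref{l2l2.bxi}.

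The decisive term, and the main obstacle, is the genuinely nonlinear self-interaction $-b(\bta,\bbu,\bta)=b(\bta,\bta,\bbu)$. Bounding it by the Lemma \ref{nonlin} estimate that keeps $\bbu$ in its $\bH^2$-controlled norm yields $|b(\bta,\bta,\bbu)|\le K\|\bbu\|_2\,\|\bta\|\,\|\nabla\bta\|$, and here I would split $\|\bta\|\le\|\bta_1\|+\|\bta_2\|$: the coarse contribution $K\|\bbu\|_2\|\bta_1\|\|\nabla\bta\|$ is harmless after Young's inequality, since $\|\bta_1\|^2$ feeds Gronwall with the integrable coefficient $\|\bbu\|_2^2$; the fine contribution, using \ref{jhH1} in the form $\|\bta_2\|\le cH\|\nabla\bta\|$, becomes $cH\|\bbu\|_2\|\nabla\bta\|^2$, which can only be \emph{absorbed} into $\mu\rho\|\hta\|_1^2$ when $\mu\rho-2H\|\bbu\|_2\ge0$ --- precisely the first smallness hypothesis. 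This absorption is the heart of the matter: unlike the semilinear case of Marion--Xu, the convective term admits no clean $\bL^2$ bound, so the error's self-interaction cannot be disposed of by the source alone and must be paid for with smallness of $H$. Applying Gronwall to the resulting differential inequality then gives $\|\bta_1(t)\|\le K(t)H^4$ together with $\|\nabla\bta\|$ of order $H^4$ in the $L^2$-in-time sense. Finally, to upgrade $\bta_2$ to a pointwise $\bL^2$ bound I would repeat the bootstrap of Lemma \ref{l2.bxi}: the inverse inequality \ref{inv.hypo} gives $\|\nabla\bta_1\|\le cH^{-1}\|\bta_1\|\le K(t)H^3$; the static second equation of \ref{bta12} --- whose nonlinear terms are kicked back using the second hypothesis $\mu-H(\|\bbu\|_2+\|\by^H\|_2)\ge0$ --- then controls $\|\nabla\bta_2\|\le K(t)H^3$; and \ref{jhH1} yields $\|\bta_2\|\le cH\|\nabla\bta_2\|\le K(t)H^4$. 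Combining the two components gives $\|\bta\|\le K(t)H^4$, and hence $\|(\bu_h-\bu^h)(t)\|\le K(t)H^4$.
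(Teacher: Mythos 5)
Your proposal is correct and follows essentially the same route as the paper's own proof: the splitting $\e=\bxi-\bta$ with Lemma \ref{l2.bxi} handling $\bxi$, the weighted energy identity from (\ref{bta12}) with the algebraically identical decomposition of the nonlinear difference (your $b(\bxi,\bu^h,\bta)$ equals the paper's $b(\bxi,\bbu,\bta)$ by antisymmetry), absorption of the self-interaction term $cH\|\bbu\|_2\|\bta_2\|_1^2$ under the first smallness hypothesis, Gronwall for $\|\bta_1\|$, and the same bootstrap (inverse inequality, then the static second equation kicked back under the second smallness hypothesis, then (\ref{jhH1})) to get $\|\bta_2\|\le K(t)H^4$. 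The only cosmetic difference is that you invoke an a priori bound $\|\td_h\bu^h\|\le K$ where the paper works with the linearized solution $\bbu$ (Lemma 5.2), but these are interchangeable in your decomposition.
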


\begin{proof}
We choose $\bphi=e^{2\alpha t}\bta_1,~\bchi=e^{2\alpha t}\bta_2$ in (\ref{bta12}).
\begin{align}\label{l2.err01}
\frac{1}{2}\frac{d}{dt}\|\hta_1\|^2+\nu \|\hta\|_1^2= e^{2\alpha t}\Lambda_h(\bta_1,\bta_2),
\end{align}
where
$$ \Lambda_h(\bta_1,\bta_2)=\Lambda_{h,1}(\bta_1)+\Lambda_{h,2}(\bta_2), $$
and
\begin{align*}
\Lambda_{h,1}(\bta_1) &=b(\bu_h,\bu_h,\bta_1)-b(\bu^h,\bu^h,\bta_1) \\
&= b(\bxi-\bta,\bu_h,\bta_1)+b(\bu_h,\bxi-\bta,\bta_1)-b(\bxi-\bta,\bxi-\bta,\bta_1) \\
\Lambda_{h,2}(\bta_2) &=b(\bu_h,\bu_h,\bta_2)-b(\bu^h,\bu^h,\bta_2) \\
&= b(\bxi-\bta,\bu_h,\bta_2)+b(\bu_h,\bxi-\bta,\bta_2)-b(\bxi-\bta,\bxi-\bta,\bta_2).
\end{align*}
Therefore
\begin{align}\label{l2.err02}
\Lambda_h(\bta_1,\bta_2)= b(\bxi-\bta,\bbu,\bta)+b(\bu_h,\bxi,\bta).
\end{align}
\noindent We estimate the nonlinear terms as follows:
\begin{align*}
b(\bu_h,\bxi,\bta)&+b(\bxi-\bta_1,\bbu,\bta) \le \big\{\|\bxi\|\|\bu_h\|_2+
(\|\bxi\|+\|\bta_1\|)\|\bbu\|_2\big\}\|\bta\|_1 , \\
b(\bta_2,\bbu,\bta) &\le \|\bta_2\|_1\|\bbu\|_2(\|\bta_1\|+\|\bta_2\|) \le
\|\bta_2\|_1\|\bbu\|_2\|\bta_1\|+H\|\bbu\|_2\|\bta_2\|_1^2.
\end{align*}
Therefore, for $\epsilon,\epsilon_1>0$,
\begin{align*}
\Lambda_h(\bta_1,\bta_2) &\le \epsilon\|\bta\|_1^2+\epsilon_1\|\bta_2\|_1^2+c(\epsilon) (\|\bu_h\|_2^2+\|\bbu\|_2^2)\|\bxi\|^2 \\
& ~~+c(\epsilon,\epsilon_1)\|\bbu\|_2^2\|\bta_1\|^2+H\|\bbu\|_2\|\bta_2\|_1^2.
\end{align*}
Now, from (\ref{l2.err01}), we find that
\begin{align}\label{l2.err03}
\frac{d}{dt}\|\hta_1\|^2+2\nu\rho &(\|\hta_1\|_1^2+\|\hta_2\|_1^2) \le 2\epsilon \|\hta\|_1^2+2\epsilon_1\|\hta_2\|_1^2 \nonumber \\
&+c(\epsilon) (\|\bu_h\|_2^2+\|\bbu\|_2^2)\|\hxi\|^2+c(\epsilon,\epsilon_1) \|\bbu\|_2^2\|\hta_1\|^2+2H\|\bbu\|_2\|\hta_2\|_1^2.
\end{align}
We choose $\epsilon=\epsilon_1=\nu\rho$ and assume that $H$ small enough such that
$$ \nu\rho-2H\|\bbu\|_2 \ge 0 $$
to obtain after integration
$$ \|\bta_1\|^2+e^{-2\alpha t}\int_{t_0}^t (\|\hta_1\|_1^2+\|\hta_2\|_1^2)~ds \le
K(t)H^8+K\int_{t_0}^t \|\bta_1(s)\|^2ds. $$
Apply Gronwall's lemma to establish $L^{\infty}(\bL^2)$-norm estimate of $\bta_1$.
We note that
$$ \|\bta_1\|_1 \le cH^{-1}\|\bta_1\| \le K(t)H^3. $$
For $\bta_2$, we again put $\bchi=e^{2\alpha t}\bta_2$ in (\ref{bta12}).
\begin{align}\label{l2.err04}
\nu \|\hta_2\|_1^2= +e^{2\alpha t}\Lambda_{h,2}(\bta_2)-\nu a(\hta_1,\hta_2).
\end{align}
Recall that
$$ \Lambda_{h,2}(\bta_2)= b(\bxi-\bta,\bbu,\bta_2)+b(\bu_h,\bxi-\bta_1,\bta_2) +b(\bxi-\bta,\bta_1,\bta_2). $$
And
\begin{align*}
b(\bxi-\bta_1,\bbu,\bta_2)+b(\bu_h,\bxi-\bta_1,\bta_2) \le (\|\bxi\|+\|\bta_1\|)
(\|\bbu\|_2+\|\bu_h\|_2)\|\bta_2\|_1 \\
b(\bxi-\bta_1,\bta_1,\bta_2) \le (\|\bxi\|_1+\|\bta_1\|_1)\|\bta_1\|_1\|\bta_2\|_1 \\
b(-\bta_2,\bbu,\bta_2)+b(-\bta_2,\bta_1,\bta_2) \le H(\|\bbu\|_2+\|\bta_1\|_2)\|\bta_2\|_1^2.
\end{align*}
Note that $\|\bta_1\| \le \|\by^H\|+\|\bby\|.$ And under the assumption
$$ \nu-H(\|\bbu\|_2+\|\by^H\|_2) \ge 0 $$
we easily obtain that
$$ \|\bta_2\|_1 \le K(t)H^3 $$
and hence
$$ \|\bta_2\| \le cH\|\bta_2\|_1 \le K(t)H^4. $$
Now, triangle inequality completes the proof.
\end{proof}

\subsection{NLGM II}

In this subsection, we deal with the error estimate for NLGM II.
As earlier, we split the error in two, that is, $\e=\bu_h-\bu^h=\bxi-\bta$. The equations and
hence the estimates regarding $\bxi$ remain same and are optimal in nature. The equation in
$\bta$ reads as follows:
\begin{align} \label{bta12a}
\left\{\begin{array}{rl}
(\bta_{1,t},\bphi) +& \nu a (\bta,\bphi)= b(\bu_h,\bu_h,\bphi)-b(\bu^h,\bu^h,\bphi),~~
\bphi\in\bJ_H \\
& \nu a (\bta,\bchi)= b(\bu_h,\bu_h,\bchi)-b(\bu^h,\bu^h,\bchi)+b(\bz^h,\bz^h,\bchi),~~
\bchi\in\bJ_h^H.
\end{array}\right.
\end{align}
\begin{lemma}\label{l2.err1}
Under the assumptions of Lemma \ref{l2.err}, we have
$$ \|(\bu_h-\bu^h)(t)\| \le K(t)H^3. $$
\end{lemma}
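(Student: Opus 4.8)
The plan is to mirror the proof of Lemma~\ref{l2.err} step for step, since the error equation (\ref{bta12a}) for NLGM~II differs from (\ref{bta12}) only by the single extra source term $b(\bz^h,\bz^h,\bchi)$ in the fine-scale line. In particular the splitting $\e=\bxi-\bta$ and all of the $\bxi$-estimates are unaffected by the nonlinearity and carry over verbatim from Section~5.1; throughout I would freely use $\|\bxi\|\le K(t)H^4$ (Lemma~\ref{l2.bxi}) together with the pointwise gradient bound $\|\nabla\bxi\|\le K(t)H^3$, which follows from $\|\bzt\|_1\le K(t)H^3$ and $\|\bth_1\|_1,\|\bth_2\|_1\le K(t)H^3$ established there. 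Thus everything reduces to re-running the $\bta$-analysis with this one new term present and tracking how it lowers the order.

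First I would form the combined equation, testing the two lines of (\ref{bta12a}) with $\bphi=e^{2\alpha t}\bta_1$ and $\bchi=e^{2\alpha t}\bta_2$ and adding, which reproduces the left-hand side and the right-hand side $\Lambda_h(\bta_1,\bta_2)$ of Lemma~\ref{l2.err} together with the extra contribution $e^{2\alpha t}b(\bz^h,\bz^h,\bta_2)$. To control the new term I would \emph{not} invoke a pointwise a~priori bound on $\bz^h$ (the estimate (\ref{apr2}) only gives $O(1)$); instead I write $\bz^h=\bz_h-\bxi_2+\bta_2$ and expand. The leading piece satisfies
$$ |b(\bz_h,\bz_h,\bta_2)| \le K(t)H^3\|\bta_2\|_1 \le \epsilon\|\bta_2\|_1^2+c(\epsilon)K(t)H^6, $$
using $\|\bz_h\|+H\|\bz_h\|_1\le K(t)H^2$ (Lemma~\ref{bzh.est}), the bound (\ref{jhH1}) for $\bta_2\in\bJ_h^H$, and Lemma~\ref{nonlin}. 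The remaining pieces either vanish by the antisymmetry $b(\cdot,\bw,\bw)=0$, or carry additional powers of $H$ through $\|\bxi_2\|\le K(t)H^4$ and $\|\nabla\bxi_2\|\le K(t)H^3$, or reduce to terms of the form $cK(t)(H+H^3)\|\bta_2\|_1^2$ that are absorbed under the smallness hypotheses on $H$ inherited from Lemma~\ref{l2.err}. Carrying out exactly the absorption and Gronwall argument of that lemma, but with $K(t)H^6$ in place of $K(t)H^8$ on the right, would yield $\|\bta_1\|\le K(t)H^3$, and then the inverse inequality (\ref{inv.hypo}) gives $\|\bta_1\|_1\le cH^{-1}\|\bta_1\|\le K(t)H^2$.

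Next I would recover $\bta_2$ from the fine-scale line alone, testing it with $\bchi=e^{2\alpha t}\bta_2$ to get
$$ \nu\|\hta_2\|_1^2 = e^{2\alpha t}\big[\Lambda_{h,2}(\bta_2)+b(\bz^h,\bz^h,\bta_2)\big]-\nu a(\hta_1,\hta_2). $$
The terms in $\Lambda_{h,2}$ are bounded as in Lemma~\ref{l2.err}, now using $\|\bta_1\|\le K(t)H^3$ and $\|\bta_1\|_1\le K(t)H^2$; the new nonlinear term again contributes $\epsilon\|\bta_2\|_1^2+K(t)H^6$; and the coupling splits as $\nu|a(\hta_1,\hta_2)|\le\epsilon\|\hta_2\|_1^2+c(\epsilon)\|\hta_1\|_1^2$, whose last term is $K(t)H^4$. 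After absorbing the $\epsilon\|\bta_2\|_1^2$ into the coercive left-hand side, the dominant contribution is this $K(t)H^4$, so $\|\bta_2\|_1\le K(t)H^2$ and hence $\|\bta_2\|\le cH\|\bta_2\|_1\le K(t)H^3$ by (\ref{jhH1}). Since $\bJ_H$ and $\bJ_h^H$ are $\bL^2$-orthogonal, $\|\bta\|^2=\|\bta_1\|^2+\|\bta_2\|^2\le K(t)H^6$; combining with $\|\bxi\|\le K(t)H^4$ and the triangle inequality would give the claimed $\|(\bu_h-\bu^h)(t)\|\le K(t)H^3$.

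The main obstacle, and the reason the estimate is genuinely $H^3$ rather than $H^4$, is the dropped nonlinearity $b(\bz^h,\bz^h,\bchi)$: its leading part $b(\bz_h,\bz_h,\bta_2)\sim H^3\|\bta_2\|_1$ is one power of $H$ larger than any source appearing in the NLGM~I analysis, so no choice of test function or absorption can beat it. The only real care is to expand $\bz^h=\bz_h-\bxi_2+\bta_2$, passing the quadratic smallness onto the sharply estimated Galerkin fine scale $\bz_h$ and onto $\bxi$ rather than onto $\bz^h$ itself (for which only an $O(1)$ bound is available), and then to verify that the residual $\bta_2$-quadratic remainders are absorbable, which is precisely where the smallness conditions on $H$ are invoked.
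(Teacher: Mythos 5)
Your proposal is correct and follows essentially the same route as the paper: the same splitting $\e=\bxi-\bta$ with the NLGM~I estimates for $\bxi$ reused, the same test functions $e^{2\alpha t}\bta_1,\,e^{2\alpha t}\bta_2$, the same key step of expanding $\bz^h=\bz_h+\bta_2-\bxi_2$ inside $b(\bz^h,\bz^h,\bta_2)$ so that antisymmetry removes the $\bta_2$-slot and the leading piece $b(\bz_h,\bz_h,\bta_2)\lesssim K(t)H^3\|\bta_2\|_1$ produces the $H^6$ right-hand side, followed by Gronwall for $\bta_1$, the fine-scale equation alone for $\bta_2$, and the triangle inequality. You even fill in the $\bta_2$ step that the paper compresses into ``as in the previous section,'' and your identification of the dropped nonlinearity as the unavoidable source of the order loss matches Remark~\ref{rmk} of the paper.
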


\begin{proof}
We choose $\bphi=e^{2\alpha t}\bta_1,~\bchi=e^{2\alpha t}\bta_2$ in (\ref{bta12a}).
\begin{align}\label{l2.err101}
\frac{1}{2}\frac{d}{dt}\|\hta_1\|^2+\nu \|\hta\|_1^2= e^{2\alpha t}\big\{\Lambda_h(\bta_1,\bta_2)
+b(\bz^h,\bz^h,\bta_2)\big\},
\end{align}
Since $\bz^h=\bz_h+\bta_2-\bxi_2,$ we have
$$ b(\bz^h,\bz^h,\bta_2)=b(\bz_h+\bta_2-\bxi_2,\bz_h-\bxi_2,\bta_2). $$
Now
\begin{align*}
b(\bxi_2,\bz_h-\bxi_2,\bta_2) &\le \|\bxi_2\|_1(\|\bz_h\|_1+\|\bxi_2\|_1)\|\bta_2\|_1 \\
b(\bta_2,\bz_h-\bxi_2,\bta_2) &=b(\bta_2,\bbz,\bta_2) \le cH\|\bbz\|_2\|\bta_2\|_1^2 \\
b(\bz_h,\bz_h-\bxi_2,\bta_2) & \le  \|\bz_h\|^{1/2}\|\bz_h\|_1^{1/2}(\|\bz_h\|_1
+\|\bxi_2\|_1)\|\bta_2\|^{1/2}\|\bta_2\|_1^{1/2} \\
&+\|\bz_h\|^{1/2}\|\bz_h\|_1^{1/2}\|\bta_2\|_1(\|\bz_h\|^{1/2}\|\bz_h\|_1^{1/2}
+\|\bxi_2\|_1) \\
& \le cH\|\bz_h\|_1(\|\bz_h\|_1+\|\bxi_2\|_1)\|\bta_2\|_1+cH^{1/2}\|\bz_h\|_1\|\bta_2\|_1.
cH^{1/2}(\|\bz_h\|_1+\|\bxi_2\|_1) \\
& \le cH\|\bz_h\|_1^2\|\bta_2\|_1+cH\|\bz_h\|_1\|\bxi_2\|_1\|\bta_2\|_1.
\end{align*}
Incorporate these in (\ref{l2.err101}). Integrate and as earlier, for small $H$, we obtain
$$ \|\bta_1\|^2+e^{-2\alpha t}\int_{t_0}^t (\|\hta_1\|_1^2+\|\hta_2\|_1^2)~ds \le
K(t)H^8+K(t)H^2\|\bz_h\|_1^4+K\int_{t_0}^t \|\bta_1(s)\|^2ds, $$
which results in
$$ \|\bta_1\|^2+e^{-2\alpha t}\int_{t_0}^t (\|\hta_1\|_1^2+\|\hta_2\|_1^2)~ds \le
K(t)H^6. $$
That is
$$ \|\bta_1\| \le K(t)H^3,~~\|\bta_1\|_1 \le K(t)H^2. $$
As in the previous section, using only the second equation of (\ref{bta12a}) we can easily conclude that
$$ \|\bta_2\| \le K(t)H^3,~~\|\bta_2\|_1 \le K(t)H^2. $$
\end{proof}
\noindent
\begin{remark}\label{rmk}
The analysis reveals that the decrease in the order of convergence is due to the presence of $b(\bz_h,\bz_h,\bchi)$ in the error equation. So, whereas in NLG I, we keep the nonlinearity
in both the equations, in NLG II, the second equation is made linear in $\bz^h$ by dropping
the term $b(\bz_h,\bz_h,\bchi)$ and which in turn appears in the error equation and is responsible for bringing down the rate of convergence in the above analysis.
\end{remark}

\subsection{Improved Error Estimate}

In this section, we try to improve the rate of convergence, using the technique of
Marion \& Xu \cite{MX95}. But this is not straightforward, as the estimate of the function $f(\bu)$ in their semi-linear problem does not hold for our $f(\bu)$ and we have to be careful in order to obtain similar results.

\noindent
First, we note that the second equation of (\ref{dwfJH1}) can be written as
\begin{equation}\label{relbyz} 
\bz^h=\Phi(\by^H),
\end{equation}
where $\Phi:\bJ_H\to \bJ_h^H$. Using this, we can write the equation in $\Phi(\by_H)$,
for $\bchi\in\bJ_h^H$
\begin{align}\label{eqn.phi1}
\nu a (\by_H+\Phi(\by_H),\bchi)+b(\by_H+\Phi(\by_H),\by_H,\bchi)+b(\by_H,\Phi(\by_H),\bchi)  =(\f, \bchi)
\end{align}
\begin{lemma}\label{err.phi} 
Under the assumptions of Lemma \ref{est.uh} and that $H$ is small enough to satisfy
$$ \frac{\nu}{2}-cH\|\bu_h\|_1 \ge 0, $$
we have
\begin{equation}\label{ep1}
\|\bz_h-\Phi(\by_H)\|+H\|\bz_h-\Phi(\by_H)\|_1 \le K(t)H^4.
\end{equation}
\end{lemma}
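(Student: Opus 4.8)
The plan is to derive an equation for the difference $\bw := \bz_h - \Phi(\by_H)\in\bJ_h^H$ and then run a single elliptic energy estimate with a kickback argument. First I would recall that $\bz_h$ solves the fine-scale Galerkin equation, namely the second line of (\ref{dwfjH}); expanding $\bu_h=\by_H+\bz_h$ inside the nonlinear term and grouping the trilinear contributions exactly as in (\ref{eqn.phi1}), this reads
$$(\bz_{ht},\bchi)+\nu a(\by_H+\bz_h,\bchi)+b(\by_H+\bz_h,\by_H,\bchi)+b(\by_H,\bz_h,\bchi)+b(\bz_h,\bz_h,\bchi)=(\f,\bchi)$$
for all $\bchi\in\bJ_h^H$. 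Subtracting the defining equation (\ref{eqn.phi1}) for $\Phi(\by_H)$ and using bilinearity of $a$ and $b$, every term carrying $\by_H$ alone and the forcing $\f$ cancels, leaving the error equation
$$\nu a(\bw,\bchi)+b(\bw,\by_H,\bchi)+b(\by_H,\bw,\bchi)=-(\bz_{ht},\bchi)-b(\bz_h,\bz_h,\bchi),\qquad \bchi\in\bJ_h^H.$$
The key structural observation is that the only genuinely nonlinear survivor on the right is $b(\bz_h,\bz_h,\bchi)$, precisely the term dropped in passing from (\ref{dwfJH}) to (\ref{dwfJH1}); so the whole estimate hinges on showing this quantity is small.

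Next I would test with $\bchi=\bw$. Antisymmetry kills $b(\by_H,\bw,\bw)=0$, leaving $\nu\|\nabla\bw\|^2+b(\bw,\by_H,\bw)$ on the left. For the convective term I would apply the $2$D Ladyzhenskaya inequality together with (\ref{jhH1}) to get $|b(\bw,\by_H,\bw)|\le c\|\bw\|\|\nabla\bw\|\|\nabla\by_H\|\le cH\|\by_H\|_1\|\nabla\bw\|^2$, and then use the $H^1$-stability of the $\bL^2$-projection (the $H$-analogue of (\ref{ph1})) to write $\|\by_H\|_1=\|P_H\bu_h\|_1\le C\|\bu_h\|_1$. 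Under the smallness hypothesis $\tfrac{\nu}{2}-cH\|\bu_h\|_1\ge 0$ this is absorbed into the viscous term, leaving $\tfrac{\nu}{2}\|\nabla\bw\|^2$ on the left; this is the kickback step.

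The main work, and the step I expect to be the real obstacle, is bounding the right-hand side to order $H^3$. For the time-derivative term I would write $|(\bz_{ht},\bw)|\le \|\bz_{ht}\|\,\|\bw\|\le cH\|\bz_{ht}\|\,\|\nabla\bw\|$ via (\ref{jhH1}) and invoke $\|\bz_{ht}\|\le K(t)H^2$ from Lemma \ref{bzh.est}, obtaining $K(t)H^3\|\nabla\bw\|$. For the dropped nonlinear term I would move a derivative onto $\bw$ and use Ladyzhenskaya to get $|b(\bz_h,\bz_h,\bw)|\le c\|\bz_h\|\,\|\nabla\bz_h\|\,\|\nabla\bw\|$; feeding in the fine-scale bounds $\|\bz_h\|\le K(t)H^2$ and $\|\bz_h\|_1\le K(t)H$ of Lemma \ref{bzh.est} again produces $K(t)H^3\|\nabla\bw\|$. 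Combining gives $\tfrac{\nu}{2}\|\nabla\bw\|^2\le K(t)H^3\|\nabla\bw\|$, hence $\|\bw\|_1\le K(t)H^3$, and a final use of (\ref{jhH1}) yields $\|\bw\|\le cH\|\nabla\bw\|\le K(t)H^4$; adding the two contributions gives the claimed bound. The delicate point is that no Gronwall-in-time argument is needed, since $\Phi$ is defined by a stationary (and, note, linear in $\Phi$) equation; the whole subtlety lies in verifying that the dropped nonlinearity $b(\bz_h,\bz_h,\cdot)$ truly sits at order $H^3$ rather than $H^2$, which is exactly what the sharp fine-scale estimates of Lemma \ref{bzh.est} deliver.
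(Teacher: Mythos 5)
Your proposal is correct and takes essentially the same route as the paper: you derive the identical error equation by subtracting (\ref{eqn.phi1}) from the fine-scale equation of (\ref{dwfjH}), test with $\bz_h-\Phi(\by_H)$, bound the $(\bz_{ht},\cdot)$ term and the dropped nonlinearity $b(\bz_h,\bz_h,\cdot)$ by $K(t)H^3\|\cdot\|_1$ using Lemma \ref{bzh.est} and (\ref{jhH1}), absorb the quadratic-in-error term under the same smallness hypothesis, and finish with $\|\cdot\|\le cH\|\cdot\|_1$. The only (cosmetic) difference is your grouping of the quadratic term as $b(\bw,\by_H,\bw)$, where the paper writes the algebraically equivalent combination $b(\Phi_{\e},\bu_h,\Phi_{\e})+b(\bz_h-\Phi_{\e},\bz_h,\Phi_{\e})$.
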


\begin{proof}
With the notation $\Phi_{\e}:= \bz_h-\Phi(\by_H)\in \bJ_h^H$, we have, by deducting
(\ref{eqn.phi1}) from the second equation of (\ref{dwfjH})
\begin{align}\label{eqn.pe}
\nu a(\Phi_{\e},\bchi)= -(\bz_{h,t},\bchi)-b(\bu_h,\bu_h,\bchi)+b(\by_H,\Phi(\by_H),\bchi) +b(\by_H+\Phi(\by_H),\by_H,\bchi).
\end{align}
Put $\bchi=\Phi_{\e}$ in (\ref{eqn.pe}) to obtain
\begin{align}\label{ep01}
\nu \|\Phi_{\e}\|_1^2 =-(\bz_{h,t},\Phi_{\e})-b(\Phi_{\e},\bu_h,\Phi_{\e})
-b(\Phi(\by_H),\Phi(\by_H),\Phi_{\e}).
\end{align}
Note that
\begin{align*}
-(\bz_{h,t},\Phi_{\e}) &\le \|\bz_{h,t}\|\|\Phi_{\e}\| \le K(t)H^3\|\Phi_{\e}\|_1 \\
-b(\Phi_{\e},\bu_h,\Phi_{\e}) &\le c\|\bu_h\|_1\|\Phi_{\e}\|\|\Phi_{\e}\|_1
\le cH\|\bu_h\|_1\|\Phi_{\e}\|_1^2 \\
-b(\Phi(\by_H),\Phi(\by_H),\Phi_{\e}) &= -b(\bz_h-\Phi_{\e},\bz_h,\Phi_{\e}) \\
&\le (\|\bz_h\|^{1/2}\|\bz_h\|_1^{1/2}+\|\Phi_{\e}\|^{1/2}\|\Phi_{\e}\|_1^{1/2})
\|\bz_h\|_1\|\Phi_{\e}\|^{1/2}\|\Phi_{\e}\|_1^{1/2} \\
&\le K(t)H^3\|\Phi_{\e}\|_1+cH\|\bz_h\|_1\|\Phi_{\e}\|_1^2.
\end{align*}
Put these estimates in (\ref{ep01}) to find
\begin{align}\label{ep02}
\frac{\nu}{2} \|\Phi_{\e}\|_1^2 \le K(t)H^6+cH\|\bu_h\|_1\|\Phi_{\e}\|_1^2.
\end{align}
We have used the fact that $\|\bz_h\| \le \|\bu_h\|+\|\by_H\| \le c\|\bu_h\|.$ And assuming
$H$ to be small enough to satisfy
$$ \frac{\nu}{2}-cH\|\bu_h\|_1 \ge 0 $$
we establish from (\ref{ep02})
$$ \|\Phi_{\e}\|_1 \le K(t)H^3. $$
And hence
$$ \|\Phi_{\e}\| \le cH\|\Phi_{\e}\|_1 \le K(t)H^4. $$
This completes the proof.
\end{proof}

\begin{lemma}\label{err.e2}
Under the assumptions of Lemma \ref{err.phi}, we have
\begin{eqnarray}
\|\e_2\|_1^2 \le K(t)H^6+K\|\e_1\|_1^2, \label{ee2.1} \\
\|\e_2\|^2 \le K(t)H^8+KH^2\|\e_1\|_1^2. \label{ee2.2}
\end{eqnarray}
\end{lemma}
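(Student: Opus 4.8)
The plan is to prove the two estimates for $\e_2 = \bz_h - \bz^h$ by rewriting it in terms of the quantity $\Phi_{\e} = \bz_h - \Phi(\by_H)$, whose norm was already controlled in Lemma~\ref{err.phi}, plus a term that measures the error in the coarse-grid components propagated through the map $\Phi$. The key observation is that $\bz^h = \Phi(\by^H)$ by \eqref{relbyz}, whereas $\Phi_{\e}$ compares $\bz_h$ against $\Phi$ evaluated at the \emph{Galerkin} coarse component $\by_H$. Hence I would write the decomposition
\begin{equation*}
\e_2 = \bz_h - \bz^h = \bigl(\bz_h - \Phi(\by_H)\bigr) + \bigl(\Phi(\by_H) - \Phi(\by^H)\bigr) = \Phi_{\e} + \bigl(\Phi(\by_H) - \Phi(\by^H)\bigr).
\end{equation*}
The first piece is $\mathcal{O}(H^4)$ in $\bL^2$ and $\mathcal{O}(H^3)$ in $\bH^1$ by \eqref{ep1}. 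The entire difficulty is therefore transferred to estimating the second piece in terms of $\e_1 = \by_H - \by^H$.

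First I would derive the equation governing $\Phi(\by_H) - \Phi(\by^H)$. Both $\Phi(\by_H)$ and $\Phi(\by^H) = \bz^h$ satisfy \eqref{eqn.phi1} (the second equation of \eqref{dwfJH1}) with their respective coarse arguments; subtracting the two instances and testing against $\bchi = \Phi(\by_H) - \Phi(\by^H) \in \bJ_h^H$ should, after using the antisymmetry $b(\bv,\bw,\bw)=0$ to annihilate the leading quadratic term, yield an inequality of the form $\nu\|\Phi(\by_H)-\Phi(\by^H)\|_1^2 \le (\text{nonlinear terms})$. The nonlinear terms are all differences in the arguments of $b(\cdot,\cdot,\cdot)$ between the two equations, and every such difference is driven by $\by_H - \by^H = \e_1$ (with $\Phi(\by_H)-\Phi(\by^H)$ itself appearing, to be absorbed). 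Using Lemma~\ref{nonlin} together with the small-scale bound \eqref{jhH1} $\|\bchi\| \le cH\|\bchi\|_1$ on the test function and the \emph{a priori} estimates from Lemmas~\ref{est.uh} and~\ref{est1.H}, I would bound these terms by $\|\e_1\|_1$ times $\|\Phi(\by_H)-\Phi(\by^H)\|_1$, possibly with an extra factor $H$ on the self-absorbing terms so that, under the standing smallness assumption on $H$ (exactly the kickback condition $\tfrac{\nu}{2}-cH\|\bu_h\|_1 \ge 0$ already invoked in Lemma~\ref{err.phi}), those can be moved to the left. This produces
\begin{equation*}
\|\Phi(\by_H)-\Phi(\by^H)\|_1 \le K\|\e_1\|_1.
\end{equation*}

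Combining the two pieces via the triangle inequality gives $\|\e_2\|_1 \le \|\Phi_{\e}\|_1 + \|\Phi(\by_H)-\Phi(\by^H)\|_1 \le K(t)H^3 + K\|\e_1\|_1$, and squaring yields \eqref{ee2.1}. For the $\bL^2$ estimate \eqref{ee2.2}, since $\Phi(\by_H)-\Phi(\by^H) \in \bJ_h^H$ I would again apply \eqref{jhH1} to gain one power of $H$: $\|\Phi(\by_H)-\Phi(\by^H)\| \le cH\|\Phi(\by_H)-\Phi(\by^H)\|_1 \le cH\|\e_1\|_1$, and combine with the $\mathcal{O}(H^4)$ bound on $\|\Phi_{\e}\|$ from \eqref{ep1} to get $\|\e_2\| \le K(t)H^4 + cH\|\e_1\|_1$, whose square is \eqref{ee2.2}.

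The main obstacle I anticipate is handling the nonlinear difference terms in the $\Phi$-equation cleanly. Because the bilinear form $b$ carries the Navier--Stokes convection nonlinearity rather than the benign Lipschitz $f(\bu)$ of \cite{MX95}, the difference $b(\bu^h,\bu^h,\bchi)$-type terms do not admit the simple $\bL^2$-$\bL^2$ bound that drove the semilinear argument; this is precisely the subtlety flagged in the introduction. The delicate point will be to split each trilinear difference so that the factor $\e_1$ (or $\Phi(\by_H)-\Phi(\by^H)$) appearing in the \emph{first or second} slot is paired, through Lemma~\ref{nonlin} and the inverse hypothesis \eqref{inv.hypo}, with bounded \emph{a priori} norms of $\bu_h$, $\by^H$ and $\bz^h$, while ensuring the terms quadratic in the unknown $\Phi(\by_H)-\Phi(\by^H)$ acquire an $H$-weight so the kickback closes. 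Keeping the powers of $H$ correct through these several trilinear estimates, so that exactly $H^3$ (energy) and $H^4$ ($\bL^2$) survive on the data-dependent part, is where the care must be exercised.
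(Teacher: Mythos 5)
Your proposal is correct and follows essentially the same route as the paper: your second piece $\Phi(\by_H)-\Phi(\by^H)$ is exactly $-\Phi^{\e}$ with $\Phi^{\e}:=\bz^h-\Phi(\by_H)$ as defined in the paper (since $\bz^h=\Phi(\by^H)$), so your decomposition $\e_2=\Phi_{\e}+(\Phi(\by_H)-\Phi(\by^H))$ coincides with the paper's $\e_2=\Phi_{\e}-\Phi^{\e}$, and subtracting the two instances of \eqref{eqn.phi1}, testing with the error itself, absorbing the quadratic terms by kickback under a smallness condition on $H$, and finishing with the triangle inequality and \eqref{jhH1} is precisely the paper's argument. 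The only cosmetic difference is that you claim the slightly sharper bound $\|\Phi(\by_H)-\Phi(\by^H)\|_1\le K\|\e_1\|_1$ where the paper keeps an extra $K(t)H^6$ term; both suffice since the $H^6$ reappears from $\|\Phi_{\e}\|_1^2$ anyway.
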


\begin{proof}
Recall that $\e_2=\bz_h-\bz^h= (\bz_h-\Phi(\by_H))-(\bz^h-\Phi(\by_H))$. With the
notation $\Phi^{\e}=\bz^h-\Phi(\by_H)$, we have $\e_2=\Phi_{\e}-\Phi^{\e}.$ The
equation in $\Phi^{\e}$ can be obtained by deducting (\ref{eqn.phi1}) from the second
equation of (\ref{dwfJH1}).
\begin{align}\label{eqn.pe1}
\nu a(\Phi^{\e}-\e_1,\bchi)= -b(\bu^h,\by^H,\bchi)-b(\by^H,\bz^h,\bchi)
+b(\by_H,\Phi(\by_H),\bchi)+b(\by_H+\Phi(\by_H),\by_H,\bchi).
\end{align}
Put $\bchi=\Phi^{\e}$ to obtain
\begin{align}\label{ee201}
\nu \|\Phi^{\e}\|_1^2= \nu a(\e_1,\Phi^{\e})+b(\bu^h,\e_1,\Phi^{\e}) +b(\e_1-\Phi^{\e},\by_H,\Phi^{\e})+b(\e_1,\Phi(\by_H),\Phi^{\e})
\end{align}
Note that
\begin{align*}
b(\bu^h,\e_1,\Phi^{\e})&=b(\bu_h-\e_1-\Phi_{\e},\e_1,\Phi^{\e})+b(\Phi^{\e},\e_1,\Phi^{\e}) \\
b(\bu_h-\e_1-\Phi_{\e},\e_1,\Phi^{\e}) &\le \|\bu_h\|_1\|\e_1\|_1\|\Phi^{\e}\|_1 +\|\e_1\|_1^2\|\Phi^{\e}\|_1+\|\Phi_{\e}\|_1\|\e_1\|_1\|\Phi^{\e}\|_1 \\
b(\Phi^{\e},\e_1,\Phi^{\e}) &= \frac{1}{2}(\Phi^{\e}\cdot\nabla\e_1,\Phi^{\e})-\frac{1}{2}
(\Phi^{\e},\cdot\nabla\Phi^{\e},\e_1) \\
&\le \|\Phi^{\e}\|\|\Phi^{\e}\|_1\|\e_1\|_1+\|\e_1\|_{\infty}\|\Phi^{\e}\|_1\|\Phi^{\e}\|
\le cH(1+L_H)\|\e_1\|_1\|\Phi^{\e}\|_1^2 \\
b(\e_1-\Phi^{\e},\by_H,\Phi^{\e}) &=b(\e_1-\Phi^{\e},\bu_h-\bz_h,\Phi^{\e}) \\
&\le \|\e_1\|_1\|\bu_h\|_1\|\Phi^{\e}\|_1+\|\e_1\|_1\|\bz_h\|_1\|\Phi^{\e}\|_1
+cH(1+L_H)\|\by_H\|_1\|\Phi^{\e}\|_1^2 \\
b(\e_1,\Phi(\by_H),\Phi^{\e}) &\le \|\e_1\|_1(\|\Phi_{\e}\|_1+\|\bz_h\|_1)\|\Phi^{\e}\|_1.
\end{align*}
Incorporate these estimates in (\ref{ee201}). Whenever it suits us, we bound $\|\e_1\|_1$ by
$\|\by_H\|_1+\|\by^H\|_1 \le K$. And therefore, we have, after kickback argument
\begin{align}\label{ee202}
\frac{\nu}{2}\|\Phi^{\e}\|_1^2 \le K\|\e_1\|_1^2+K(t)H^6+cH(1+L_H)(\|\by_H\|_1+\|\by^H\|_1)
\|\Phi^{\e}\|_1^2.
\end{align}
Assuming $H$ small enough to satisfy
$$ \frac{\nu}{2}-cH(1+L_H)(\|\by_H\|_1+\|\by^H\|_1) > 0, $$
we obtain
$$ \|\Phi^{\e}\|_1^2 \le K(t)H^6+K\|\e_1\|_1^2. $$
And so
$$ \|\Phi^{\e}\|^2 \le K(t)H^8+KH^2\|\e_1\|_1^2. $$
Using triangle inequality, we complete the proof.
\end{proof}
\begin{remark}\label{eta.subopt}
Recall that $\e_i=\bxi_i-\bta_i,~i=1,2$ and since the linearized error $\bxi$ (that is, $\bxi_1,\bxi_2$) is optimal in nature, we have from (\ref{ee2.1})-(\ref{ee2.2})
\begin{eqnarray}
\|\bta_2\|_1^2 \le K(t)H^6+c\|\bta_1\|_1^2 \label{eta.H1} \\
\|\bta_2\|^2 \le K(t)H^8+cH^2\|\bta_1\|_1^2. \label{eta.L2}
\end{eqnarray}
\end{remark}
\noindent
Following Marion \& Xu \cite{MX95}, we introduce the operator $R_h^H:\bJ_h\to \bJ_h^H$
satisfying
\begin{equation}\label{prop.R}
a\big(\bv-R_h^H\bv,\bchi\big)=0,~\forall\chi\in\bJ_h^H.
\end{equation}
With the notations
$$ ||\bv||_R=\|(I-R_h^H)\bv\|_1,~~~ (\bv,\bw)_R=a\big((I-R_h^H)\bv,(I-R_h^H)\bw\big), $$
we have, from Lemma 4.1 of \cite{MX95},
\begin{equation}\label{norm.R}
c_1\|\bv\|_1 \le \|\bv\|_R \le c_2\|\bv\|_1,
\end{equation}
where $c_1,c_2$ are positive constants independent of $h,H$.
And similar to Lemmas 4.5 and 4.6 of \cite{MX95}, we find for $\bphi\in\bJ_H$
\begin{eqnarray}
(\by_t^H,\bphi)+\nu (\by^H,\bphi)_R &=& (\f,(I-R_h^H)\bphi)-b(\bu^h,\bu^h,(I-R_h^H)\bphi) -b(\bz^h,\bz^h,R_h^H\bphi) \label{eqn.R} \\
(\by_{H,t},\bphi)+\nu (\by_H,\bphi)_R &=& (\f,(I-R_h^H)\bphi)-b(\bu_h,\bu_h,(I-R_h^H)\bphi) +(\bu_{h,t},R_h^H\bphi).
\end{eqnarray}
Now, for $\bphi\in\bJ_H$, we write the equation in $\e_1=\by_H-\by^H$ as
\begin{align}\label{eqn.e1.R}
(\e_{1,t},\bphi)+\nu (\e_1,\bphi)_R= +(\bu_{h,t},R_h^H\bphi)+b(\bz^h,\bz^h,R_h^H\bphi)
-b(\e_1+\e_2,\bu_h,(I-R_h^H)\bphi) \nonumber \\
-b(\bu^h,\e_1+\e_2,(I-R_h^H)\bphi).
\end{align}
\begin{lemma}\label{err.e1.h1}
Under the assumptions Lemma \ref{err.phi}, we have
$$ \|\e_1\|_1^2+\int_{t_0}^t \|\e_{1,t}\|^2ds \le K(t)H^6. $$
\end{lemma}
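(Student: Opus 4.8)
The plan is to test the equation \eqref{eqn.e1.R} for $\e_1$ with the natural energy multiplier $\bphi = e^{2\alpha t}\e_1$, exploiting the coercivity built into the $R$-inner product via \eqref{norm.R}. Writing $\hat{\e}_1 = e^{\alpha t}\e_1$, the left-hand side produces $\tfrac12\tfrac{d}{dt}\|\hat{\e}_1\|^2 - \alpha\|\hat{\e}_1\|^2 + \nu\|\hat{\e}_1\|_R^2$, and by \eqref{norm.R} the term $\nu\|\hat{\e}_1\|_R^2 \ge \nu c_1^2\|\hat{\e}_1\|_1^2$ dominates the negative $\alpha$-contribution after invoking Poincar\'e, giving a genuine coercive bound on $\|\hat{\e}_1\|_1^2$. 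The aim is to absorb all right-hand terms into a small multiple of $\|\hat{\e}_1\|_1^2$ plus lower-order data, and then to integrate from $t_0$ to $t$, multiply by $e^{-2\alpha t}$, and apply Gronwall.

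First I would handle the two genuinely benign right-hand contributions. The term $(\bu_{h,t}, R_h^H\bphi)$ should be controlled by noting that $R_h^H$ maps into $\bJ_h^H$, so I can use the approximation property \eqref{jhH1} together with the available bounds on $\bu_{h,t}$ (from the a priori estimates of Lemma \ref{est.uh}) to gain a factor of $H$; this is exactly the superconvergence-of-projection mechanism flagged in the introduction. The term $b(\bz^h,\bz^h,R_h^H\bphi)$ is controlled using the a priori smallness of $\bz^h$ from Lemma \ref{est1.H} and again the $H$-factor from $R_h^H\bphi\in\bJ_h^H$; since $\bz^h$ carries two high-frequency factors this produces a high power of $H$. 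Next I would treat the two trilinear convection terms $b(\e_1+\e_2,\bu_h,(I-R_h^H)\bphi)$ and $b(\bu^h,\e_1+\e_2,(I-R_h^H)\bphi)$ using the estimates of Lemma \ref{nonlin}, splitting off the $\e_1$ part (kept against $\|\e_1\|_1^2$ with a coercivity kickback) from the $\e_2$ part. The crucial device here is Lemma \ref{err.e2}, specifically the inequality \eqref{ee2.1}, which lets me replace every occurrence of $\|\e_2\|_1^2$ by $K(t)H^6 + K\|\e_1\|_1^2$, so that $\e_2$ never enters with a power lower than $H^6$ and only contributes an extra $\|\e_1\|_1^2$ that Gronwall can absorb.

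The main obstacle is precisely the coupling through $\e_2$: because the fine-scale error is not small on its own but only controlled \emph{relative} to $\e_1$ via Lemma \ref{err.e2}, one must arrange the constants in the kickback so that the accumulated coefficient of $\|\e_1\|_1^2$ on the right is strictly smaller than the coercive coefficient $\nu c_1^2$ coming from \eqref{norm.R}, after choosing $H$ small enough to satisfy the hypotheses of Lemma \ref{err.phi}. This is where the smallness condition on $H$ and the logarithmic factor $L_H$ reappear, since bounding $b(\Phi^{\e},\e_1,\Phi^{\e})$-type terms in the background forced the use of the Brezis--Gallouet inequality. Once the bookkeeping yields
\[
\frac{d}{dt}\|\hat{\e}_1\|^2 + \nu c_1^2\|\hat{\e}_1\|_1^2 \le K(t)H^6 e^{2\alpha t} + K\|\hat{\e}_1\|^2,
\]
integration, multiplication by $e^{-2\alpha t}$, and Gronwall's lemma give $\|\e_1\|_1^2 \le K(t)H^6$, and the integrated coercive term simultaneously delivers $\int_{t_0}^t\|\hat{\e}_{1,t}\|^2\,ds$; recovering $\int_{t_0}^t\|\e_{1,t}\|^2\,ds$ requires a separate test with $\bphi = e^{2\alpha t}\e_{1,t}$, moving the time derivative onto $\|\e_1\|_R^2$ and using the already-established $\|\e_1\|_1$ bound together with the right-hand estimates to close.
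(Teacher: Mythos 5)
Your proposal has a genuine gap, and it sits exactly where the paper's proof does its real work. First, the conclusion you draw from the energy test $\bphi=e^{2\alpha t}\e_1$ is not what that test gives: integrating
\[
\frac{d}{dt}\|\hat{\e}_1\|^2+\nu c_1^2\|\hat{\e}_1\|_1^2 \le K(t)H^6e^{2\alpha t}+K\|\hat{\e}_1\|^2
\]
and applying Gronwall yields a \emph{pointwise} bound on $\|\e_1\|^2$ together with an \emph{integrated} bound on $\int_{t_0}^t\|\e_1\|_1^2\,ds$ --- not the pointwise bound $\|\e_1(t)\|_1^2\le K(t)H^6$ that the lemma asserts; likewise the coercive term delivers $\int\|\e_1\|_1^2$, not $\int\|\e_{1,t}\|^2$. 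You partly acknowledge this, but it means the whole lemma rests on your deferred ``separate test with $\bphi=\e_{1,t}$'' --- which is in fact the \emph{only} test the paper performs --- and your proposal gives no workable treatment of the two dangerous terms there.

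Concretely, with $\bphi=\e_{1,t}$ the terms $(\bu_{h,t},R_h^H\e_{1,t})$ and $b(\bz^h,\bz^h,R_h^H\e_{1,t})$ cannot be handled by the ``gain a factor $H$ from (\ref{jhH1})'' mechanism you describe: since $\e_{1,t}\in\bJ_H$, the inverse inequality (\ref{inv.hypo}) cancels that gain,
\[
\|R_h^H\e_{1,t}\|\le cH\|R_h^H\e_{1,t}\|_1\le cH\|\e_{1,t}\|_1\le c\|\e_{1,t}\|,
\]
so the best direct bound is $(\bu_{h,t},R_h^H\e_{1,t})=(\bz_{h,t},R_h^H\e_{1,t})\le K(t)H^2\|\e_{1,t}\|$, which after Young's inequality leaves only $K(t)H^4$ on the right and proves merely $\|\e_1\|_1\le K(t)H^2$ --- suboptimal. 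The idea you are missing is the paper's integration by parts \emph{in time}: write
\[
(\bu_{h,t},R_h^H\e_{1,t})=\frac{d}{dt}(\bu_{h,t},R_h^H\e_1)-(\bu_{h,tt},R_h^H\e_1),\qquad
b(\bz^h,\bz^h,R_h^H\e_{1,t})=\frac{d}{dt}\,b(\bz_h,\bz_h,R_h^H\e_1)-\cdots,
\]
so that the data terms pair with $R_h^H\e_1$, where (\ref{jhH1}) genuinely gains an $H$ (because the left-hand side controls $\e_1$, not $\e_{1,t}$), at the price of second time derivatives, controlled by $\|\bz_{htt}\|\le K(t)H^2$ from Lemma \ref{bzh.est} --- this is precisely why that lemma includes the $\bz_{htt}$ estimate. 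The resulting total-derivative terms are then absorbed after time integration using $\e_1(t_0)=0$ and (\ref{norm.R}). Your use of (\ref{ee2.1}) to convert $\|\e_2\|_1^2$ into $K(t)H^6+K\|\e_1\|_1^2$ and close with Gronwall does match the paper's final step, and the Brezis--Gallouet factor $L_H$ you invoke is not actually needed in this lemma; but without the time-integration-by-parts device the argument cannot reach $H^6$.
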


\begin{proof}
Put $\bphi=\e_{1,t}$ in (\ref{eqn.e1.R}) and observe that
\begin{align*}
(\bu_{h,t},R_h^H\e_{1,t}) &=\frac{d}{dt}(\bu_{h,t},R_h^H\e_1)-(\bu_{h,tt},R_h^H\e_1) \\
&=\frac{d}{dt}(\bu_{h,t},R_h^H\e_1)-((I-P_H)\bu_{tt},R_h^H\e_1)-((\bu_h-\bu)_{tt},
R_h^H\e_1) \\
& \le \frac{d}{dt}(\bu_{h,t},R_h^H\e_1)+K(t)H^3\|\e_1\|_1, \\
-b(\e_1+\e_2,\bu_h,& (I-R_h^H)\e_{1,t})-b(\bu^h,\e_1+\e_2,(I-R_h^H)\e_{1,t}) \\
& \le c(\|\e_1\|_1+\|\e_2\|_1)(\|\bu_h\|_2+\|\bu^h\|_2)\|\e_{1,t}\| \\
b(\bz^h,\bz^h,R_h^H\e_{1,t}) &=b(\bz_h-\e_2,\bz_h-\e_2,R_h^H\e_{1,t})
= \frac{d}{dt} b(\bz_h,\bz_h,R_h^H\e_1)-b(\bz_{h,t},\bz_h,R_h^H\e_1) \\
& -b(\bz_h,\bz_{h,t},R_h^H\e_1)+b(\bz^h,-\e_2,R_h^H\e_{1,t})+b(\e_2,\bz_h,R_h^H\e_{1,t}) \\
-b(\bz_{h,t},\bz_h,R_h^H\e_1)&-b(\bz_h,\bz_{h,t},R_h^H\e_1) \le cH\|\bz_h\|_1 \|\bz_{h,t}\|_1\|\e_1\|_1 \\
b(\e_2,\bz_h,R_h^H\e_{1,t})& \le c\|\e_2\|^{1/2}\|\e_2\|_1^{1/2}(\|\bz_h\|_1
\|R_h^H\e_{1,t}\|^{1/2}\|R_h^H\e_{1,t}\|_1^{1/2}+\|\bz_h\|^{1/2}\|\bz_h\|_1^{1/2}
\|R_h^H\e_{1,t}\|_1) \\
&\le cH\|\e_2\|_1\|\bz_h\|_1\|\e_{1,t}\|_1 \le c\|\e_2\|_1\|\bz_h\|_1\|\e_{1,t}\|.
\end{align*}
Here, we have used that $\|(I-R_h^H)\e_{1,t}\| \le \|\e_{1,t}\|+cH\|\e_{1,t}\|_1
\le c\|\e_{1,t}\|$. And now we find
\begin{align}\label{e1h101}
\|\e_{1,t}\|^2+\frac{\nu}{2}\frac{d}{dt}\|\e_1\|_R^2 \le K(t)H^3\|\e_1\|_1+
\frac{d}{dt} \Big\{(\bu_{h,t},R_h^H\e_1)+b(\bz_h,\bz_h,R_h^H\e_1)\Big\} \nonumber \\
+K(\|\e_1\|_1+\|\e_2\|_1) \|\e_{1,t}\|+cH\|\bz_h\|_1 \|\bz_{h,t}\|_1\|\e_1\|_1
+c\|\e_2\|_1(\|\bz_h\|_1+\|\bz^h\|_1)\|\e_{1,t}\|.
\end{align}
Integrate (\ref{e1h101}), use (\ref{norm.R}) and the fact that $\e_1(t_0)=0$ to find
\begin{align*}
\|\e_1\|_1^2+\int_{t_0}^t \|\e_{1,t}\|^2ds \le K(t)H^6+c\int_{t_0}^t (\|\e_1\|_1^2
+\|\e_2\|_1^2)~ds+(\bu_{h,t},R_h^H\e_1)+b(\bz_h,\bz_h,R_h^H\e_1).
\end{align*}
As earlier, we estimate the last three terms to obtain
\begin{align}\label{e1h102}
\|\e_1\|_1^2+\int_{t_0}^t \|\e_{1,t}\|^2ds \le K(t)H^6+c\int_{t_0}^t (\|\e_1\|_1^2
+\|\e_2\|_1^2)~ds.
\end{align}
We note from (\ref{eta.H1}) and triangle inequality that
$$ \|\e_2\|_1^2 \le K(t)H^6+\|\bta_1\|_1^2 \le K(t)H^6+\|\e_1\|_1^2. $$
Therefore
\begin{align*} 
\|\e_1\|_1^2+\int_{t_0}^t \|\e_{1,t}\|^2ds \le K(t)H^6+c\int_{t_0}^t \|\e_1\|_1^2ds.
\end{align*}
Use Gronwall's lemma to establish
$$ \|\e_1\|_1^2+\int_{t_0}^t \|\e_{1,t}\|^2ds \le K(t)H^6. $$
\end{proof}
\begin{remark}
This tells us that
$$ \|\bta_1\|_1 \le K(t)H^3, $$
and as a result, from Remark \ref{eta.subopt}, we have
$$ \|\bta_2\|+H\|\bta_2\|_1 \le K(t)H^4. $$
Another application of triangle inequality results in
$$ \|\e_2\|+H\|\e_2\|_1 \le K(t)H^4. $$
\end{remark}
\noindent
For the final estimate, we note down the equations in terms of $\e_i,~i=1,2$.
\begin{align}\label{eqn.e12}
\left\{\begin{array}{rl}
(\e_{1,t},\bphi)+\nu a (\e,\bphi)=& -b(\bu_h,\bu_h,\bphi)+b(\bu^h,\bu^h,\bphi) \\
\nu a (\e,\bchi)=& -(\bz_{ht},\bchi)-b(\bu_h,\bu_h,\bchi)
+b(\bu^h,\bu^h,\bchi)-b(\bz^h,\bz^h,\bchi),
\end{array}\right.
\end{align}

\begin{lemma}\label{err.e1}
Under the assumptions of Lemma \ref{err.phi}, we have
$$ \|(\bu_h-\bu^h)(t)\| \le K(t)H^4,~~t>t_0. $$
\end{lemma}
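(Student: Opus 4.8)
The plan is to exploit the $\bL^2$-orthogonality of $\bJ_H$ and $\bJ_h^H$, which gives $\|\e\|^2=\|\e_1\|^2+\|\e_2\|^2$. The remark following Lemma~\ref{err.e1.h1} already supplies $\|\e_2(t)\|\le K(t)H^4$, so the entire problem collapses to the single coarse-scale bound $\|\e_1(t)\|\le K(t)H^4$, for which I may use $\|\e_1\|_1\le K(t)H^3$, $\|\e_2\|+H\|\e_2\|_1\le K(t)H^4$, the optimality $\|\bxi\|\le K(t)H^4$, and the estimates of $\bz_h,\bz_{h,t}$ from Lemma~\ref{bzh.est}. Since $\e_1\in\bJ_H$ the inverse inequality runs the wrong way, so the extra power of $H$ in the $\bL^2$ norm cannot be read off from the $\bH^1$ estimate and must be won by a genuine duality (Aubin--Nitsche) argument, exactly as in the passage from Lemma~\ref{l2l2.bxi} to Lemma~\ref{l2.bxi}.

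First I would set up a backward-in-time dual problem on the coupled pair, mimicking (\ref{bp1}): for fixed terminal time, find $\bw=\bw_1+\bw_2$, $\bw_1\in\bJ_H$, $\bw_2\in\bJ_h^H$, solving a \emph{linearised} version of (\ref{dwfJH1}) in which the convection is frozen at $\bu_h$, with source $e^{2\alpha\tau}(\cdot,\e_1)$ on the $\bJ_H$ component, zero source on the $\bJ_h^H$ component, and terminal condition $\bw_1(t)=0$. After reversing time this is a well-posed forward problem, and because the frozen convection is lower order it retains the $\bH^2$-regularity of (\ref{bp1.est}), namely $\int_{t_0}^t e^{-2\alpha\tau}\|\bw\|_2^2\,d\tau\le C\int_{t_0}^t e^{2\alpha\tau}\|\e_1\|^2\,d\tau$. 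The frozen term is chosen precisely so that, upon pairing $\bw$ against the error equations (\ref{eqn.e12}) and integrating over $(t_0,t)$, the boundary terms vanish ($\bw_1(t)=0$, $\e_1(t_0)=0$) and the convective contributions that are \emph{linear} in $\e$, namely $b(\bu_h,\e,\bw)+b(\e,\bu_h,\bw)$, cancel against the frozen-convection terms of the dual problem. What survives is
\begin{align*}
e^{-2\alpha t}\int_{t_0}^t e^{2\alpha\tau}\|\e_1\|^2\,d\tau
\le e^{-2\alpha t}\int_{t_0}^t\big(|b(\e,\e,\bw)|+|b(\bz^h,\bz^h,\bw_2)|+|(\bz_{h,t},\bw_2)|\big)\,d\tau.
\end{align*}

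The decisive point is that $\bw_2=(I-P_H)\bw$ obeys $\|\bw_2\|\le cH^2\|\bw\|_2$, exactly as in (\ref{new.est}). Hence $|(\bz_{h,t},\bw_2)|\le\|\bz_{h,t}\|\,\|\bw_2\|\le K(t)H^4\|\bw\|_2$, and, writing $\bz^h=\bz_h-\e_2$ and invoking Lemma~\ref{nonlin} together with the a priori bounds $\|\bz^h\|\le K(t)H^2$, $\|\bz^h\|_1\le K(t)H$, $\|\Delta_h\bz^h\|\le K$ (which follow from $\bz^h=\bz_h-\e_2$ and the estimates already in hand), one also gets $|b(\bz^h,\bz^h,\bw_2)|\le K(t)H^4\|\bw\|_2$; the quadratic term is smaller still, $|b(\e,\e,\bw)|\le c\|\e\|\,\|\e\|_1\,\|\bw\|_1\le K(t)H^6\|\bw\|_2$. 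Cauchy--Schwarz in time together with the regularity bound then yields $\int e^{2\alpha\tau}\|\e_1\|^2\le K(t)H^4\big(\int e^{2\alpha\tau}\|\e_1\|^2\big)^{1/2}$, hence the $L^2(\bL^2)$ estimate $e^{-2\alpha t}\int_{t_0}^t e^{2\alpha\tau}\|\e_1\|^2\,d\tau\le K(t)H^8$. Finally I would upgrade this time-integrated bound to the pointwise one by the Stokes-type projection device already used for $\bxi$ in Lemmas~\ref{l2.bzt}--\ref{l2.bxi}: write $\e_1=\bzt_1-\bth_1$, bound the projection part $\bzt$ pointwise by steady-Stokes duality ($O(H^4)$), and control the remainder $\bth$ by a parabolic energy estimate fed by the $L^2(\bL^2)$ bound just obtained. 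Adding $\|\e_2(t)\|\le K(t)H^4$ gives the claim.

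The hard part is the term $b(\bz^h,\bz^h,\bw_2)$ — the very term singled out in Remark~\ref{rmk} as responsible for the loss of one power of $H$ in NLGM~II. In any direct energy estimate its third slot is filled by a test function of $\bL^2$-size $O(H\|\cdot\|_1)$, which pins the direct analysis at $\mathcal{O}(H^3)$. The improvement to $\mathcal{O}(H^4)$ is possible only because in the duality that slot is occupied by the fine component $\bw_2=(I-P_H)\bw$ of an $\bH^2$-regular dual solution, whose $\bL^2$ size is $O(H^2\|\bw\|_2)$; this extra half-order — the superconvergence/Aubin--Nitsche gain the paper relies on throughout — is exactly what the $\bz_h$-quadratic nonlinearity consumes. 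The point requiring care is to verify that the frozen-convection dual problem genuinely keeps full $\bH^2$-regularity, so that the linear-in-$\e$ terms are truly cancelled rather than merely absorbed (which would cost a smallness-of-$H$ or Gronwall constant and jeopardise the clean $K(t)$ dependence).
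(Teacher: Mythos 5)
Your route is genuinely different from the paper's, and while its central mechanism is sound, as a proof it is incomplete. The paper never sets up a backward-in-time dual problem at all: it tests only the first (coarse) equation of (\ref{eqn.e12}) with $\bphi=\td_H^{-1}\e_{1,t}$, where $\td_H=P_H(-\Delta_h)$. This single choice does everything at once: the time-derivative term produces $\|\e_{1,t}\|_{-1}^2$, the viscous term produces $\frac{\nu}{2}\frac{d}{dt}\|\e_1\|^2$, and the convective differences are bounded by $c(\|\e_1\|+\|\e_2\|)(\|\bu_h\|_2+\|\bu^h\|_2)\|\e_{1,t}\|_{-1}$; Young's inequality, the bound $\|\e_2\|\le K(t)H^4$ from the remark after Lemma \ref{err.e1.h1}, and Gronwall's lemma then give $\|\e_1\|^2+\int_{t_0}^t\|\e_{1,t}\|_{-1}^2\,ds\le K(t)H^8$ \emph{pointwise} in $t$. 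Note the structural point this exploits and your proposal misses: since the test function lies in $\bJ_H$, the second equation is never used, so the fine-scale sources $(\bz_{ht},\cdot)$ and $b(\bz^h,\bz^h,\cdot)$ --- exactly the terms your entire dual-problem apparatus is built to tame --- simply never appear; the fine scales enter only through the already-established estimate of $\|\e_2\|$.

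As for your own argument, there are concrete gaps. First, the discrete $\bH^2$-regularity $\int e^{-2\alpha\tau}\|\bw\|_2^2\,d\tau\le C\int e^{2\alpha\tau}\|\e_1\|^2\,d\tau$ is asserted for a problem that is not (\ref{bp1}): to cancel $b(\e,\bu_h,\bw)+b(\bu_h,\e,\bw)$ exactly, your backward system must carry adjoint convection terms (unknown in the third slot of $b(\cdot,\cdot,\cdot)$), and the paper's estimate (\ref{bp1.est}) is stated only for the convection-free system; you flag this as the point requiring care but never carry it out, and it is precisely where the constant $K(t)$ independent of $h,H$ must be earned. Second, your duality yields only the time-integrated bound $e^{-2\alpha t}\int_{t_0}^t e^{2\alpha\tau}\|\e_1\|^2\,d\tau\le K(t)H^8$, and the promised upgrade to the pointwise statement is only gestured at: the splitting of Lemmas \ref{l2.bzt}--\ref{l2.bxi} was built for the \emph{linear} system (\ref{bxi12}), whose stationary part is driven by $(\bz_{ht},\cdot)$ alone, whereas the analogous splitting for $\e_1$ must carry the nonlinear differences and $b(\bz^h,\bz^h,\cdot)$ as data, and the resulting remainder equation needs time-derivative information (e.g.\ on $\e_{2,t}$) that is nowhere available. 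Third, a smaller point: the bound $\|\Delta_h\bz^h\|\le K$ you invoke does not follow from ``the estimates already in hand'' --- Lemma \ref{bzh.est} controls only $\bL^2$ and $\bH^1$ norms of $\bz_h$ and its time derivatives; fortunately it is also unnecessary, since the Ladyzhenskaya-type bounds of Lemma \ref{nonlin}, with $\|\bz^h\|\le K(t)H^2$, $\|\bz^h\|_1\le K(t)H$, $\|\bw_2\|\le cH^2\|\bw\|_2$ and $\|\bw_2\|_1\le cH\|\bw\|_2$, already give $|b(\bz^h,\bz^h,\bw_2)|\le K(t)H^4\|\bw\|_2$. In short: your key idea (paying for the quadratic fine-scale term with the $O(H^2)$ smallness of the fine dual component) correctly mirrors the paper's treatment of the linear part $\bxi$, but it is not needed for this lemma, and the two unproved ingredients above leave the proposal short of a proof.
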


\begin{proof}
With the notation $\td_H=P_H(-\Delta_h)$, we choose $\phi=\td_H^{-1}\e_{1,t}$ in the
first equation of (\ref{eqn.e12}) to find
\begin{align}\label{ee101}
\|\e_{1,t}\|_{-1}^2+\frac{\nu}{2}\frac{d}{dt}\|\e_1\|^2= b(\e_1+\e_2,\bu_h,\td_H^{-1}\e_{1,t}) +b(\bu^h,\e_1+\e_2,\td_H^{-1}\e_{1,t}).
\end{align}
As earlier, we have
\begin{align*}
b(\e_1+\e_2,\bu_h,\td_H^{-1}\e_{1,t})+b(\bu^h,\e_1+\e_2,\td_H^{-1}\e_{1,t})
\le c(\|\e_1\|+\|\e_2\|)(\|\bu_h\|_2+\|\bu^h\|_2)\|\e_{1,t}\|_{-1}.
\end{align*}
Integrate (\ref{ee101}) and use the above estimate to find
\begin{align*}
\|\e_1\|^2+\int_{t_0}^t \|\e_{1,t}\|_{-1}^2 \le c\int_{t_0}^t (\|\e_1\|^2+\|\e_2\|^2)~ds
\le K(t)H^8+c\int_{t_0}^t \|\e_1\|^2ds.
\end{align*}
Apply Gronwall's lemma to conclude
$$ \|\e_1\|^2+\int_{t_0}^t \|\e_{1,t}\|_{-1}^2 \le K(t)H^8. $$
\end{proof}

\begin{remark}\label{rmk1}
It is clear from our above analysis is that the linearized error between NLG approximation and
Galerkin approximation is of order $H^4$ in $L^2$-norm. However, non-linearized part of the
error may not always be of same order. For example, if the equation in $\bz^h$ contains only
$b(\by^H,\by^H,\bchi)$, then the non-linearized part of the error (i.e. the equation in $\bta$)
will contain additional terms like $b(\by^H,\bz^h,\bchi)$ and $b(\bz^h,\by^H,\bchi)$
apart from the non-linear terms of the second equation of (\ref{bta12a}). And with one of these
terms, we believe, we can only manage $H^3$ order of convergence in $L^2$-norm.
\end{remark}

\section{Summary}

In this work, our main focus is in obtaining optimal $L^2$-error estimate , that is,
$\mathcal{O}(H^4)$ for nonlinear Galerkin finite element approximations. For that purpose, we
have discussed two NLGMs. In the first one, small scales are assumed stationary. In the second
one, we have an additional assumption that interactions between small scales are negligible.
And in both these cases, we have managed to show optimal $L^2$-error estimate. But any further
simplification of the small scales equations will lead to sub-optimal error estimate, say
$\mathcal{O}(H^3)$, as has been observed in the remark \ref{rmk1}.
\vspace*{0.2cm} \\

\noindent
{\bf Acknowledge}: The author would like to thank CAPES/INCTMat, Brazil for financial grant.


\begin{thebibliography}{25}

\bibitem{AM94}
Ammi, A. and Marion, M. ,
{\it Nonlinear Galerkin Methods and Mixed Finite Elements: Two-grid Algorithms for the
Navier-Stokes Equations},
Numer. Math. 57 (1994), 189-214.

\bibitem{GP08}
Guermond, J. and Prudhomme, S. ,
{\it A fully discrete nonlinear {G}alerkin method for the 3{D} {N}avier-{S}tokes equations},
Numer. Methods Partial Differential Equations 24 (2008), 759-775.

\bibitem{HL96}
He, Y. and Li, K. ,
{\it Nonlinear Galerkin Method and Two-step Method for the Navier-Stokes Equations},
Numer. Methods Partial Differential Equations 12 (1996), 283-305.

\bibitem{HL98}
He, Y. and Li, K. ,
{\it Convergence and Stability of Finite Element Nonlinear Galerkin Method for the Navier-Stokes Equations},
Numer. Math. 79 (1998), 77-106.

\bibitem{HL99}
He, Y. and Li, K. ,
{\it Optimum finite element nonlinear {G}alerkin algorithm for the
Navier-{S}tokes equations},
Math. Numer. Sin. 21 (1999), 29-38.

\bibitem{HLL99}
He, Y. , Li, D. and Li, K. ,
{\it Nonlinear {G}alerkin method and {C}rank-{N}icolson method for viscous incompressible flow},
J. Comput. Math. 17 (1999), 139-158.

\bibitem{HMMC04}
He, Y. , Miao, H. , Mattheij, R. , and Chen, Z. ,
{\it Numerical analysis of a modified finite element nonlinear {G}alerkin method},
Numer. Math. 97 (2004), 725-756.

\bibitem{HWCL03}
He, Y. , Wang, A. , Chen, Z. and Li, K. ,
{\it An optimal nonlinear {G}alerkin method with mixed finite elements for the steady {N}avier-{S}tokes equations},
Numer. Methods Partial Differential Equations 19 (2003), 762-775.

\bibitem{HR82}
Heywood, J. and Rannacher, R. ,
{\it Finite element approximation of the nonstationary Navier-Stokes problem. I. Regularity of solutions and second-order error estimates for spatial discretization},
SIAM J. Numer. Anal. 19 (1982), no. 2, 275--311.

\bibitem{HR93}
Heywood, J. and Rannacher, R. ,
{\it On the question of turbulence modeling by approximate inertial manifolds and the
nonlinear {G}alerkin method},
SIAM J. Numer. Anal. 30 (1993), 1603-1621.

\bibitem{LH10}
Liu, Q. and Hou, Y. ,
{\it A two-level correction method in space and time based on
{C}rank-{N}icolson scheme for {N}avier-{S}tokes equations},
Int. J. Comput. Math. 87 (2010), 2520-2532.

\bibitem{MT89}
Marion, M. and Temam, R. ,
{\it Nonlinear Galerkin Methods},
SIAM J Numer. Anal. 26 (1989), 1137-1159.

\bibitem{MT90}
Marion, M. and Temam, R. ,
{\it Nonlinear Galerkin Methods: The Finite Element Case},
Numer. Math. 57 (1990), 205-226.

\bibitem{MX95}
Marion, M. and Xu, J. ,
{\it Error Estimates on a New Nonlinear Galerkin Method Based on Two-grid Finite Elements},
SIAM J Numer. Anal. 32 (1995), 1170-1184.

\bibitem{NR97}
Nabh, G. and Rannacher, R. ,
{\it A Comparative Study of Nonlinear Galerkin Finite Element Methods for One-dimensional Dissipative Evolution Problems},
East-West J. Numer. Math. 5 (1997), 113-144.

\bibitem{temam}
Temam, R. , 
{\it  Navier-Stokes Equations Theory and Numerical Analysis}, 
North-Holland Publishing Co., Amsterdam, xii+526, 1984.

\end{thebibliography}
\end{document}